\newtheorem{theorem}{Theorem}
\newtheorem{lemma}[theorem]{Lemma}
\newcommand{\ve}{\boldsymbol}
\newcommand{\Cr}{\operatorname{CR}}
\newcommand{\aCr}{\operatorname{CR^{\rm a}}}
\DeclareMathOperator{\rank}{rank}
\DeclareMathOperator{\lin}{lin}
\DeclareMathOperator{\pos}{pos}
\DeclareMathOperator{\supp}{supp}
\DeclareMathOperator{\intt}{int}
\DeclareMathOperator{\aff}{aff}
\DeclareMathOperator{\GL}{GL}
\newcommand{\R}{\mathbb{R}}
\newcommand{\Z}{\mathbb{Z}}
\newcommand{\bA}{\bm{A}}
\newcommand{\bx}{\bm{x}}
\newcommand{\bh}{\bm{h}}
\newcommand{\bB}{\bm{B}}
\newcommand{\ba}{\bm{a}}
\newcommand{\br}{\bm{r}}
\newcommand{\be}{\bm{e}}
\newcommand{\by}{\bm{y}}
\newcommand{\bz}{\bm{z}}
\newcommand{\bb}{\bm{b}}
\newcommand{\bv}{\bm{v}}
\newcommand{\bw}{\bm{w}}
\newcommand{\bU}{\bm{U}}
\newcommand{\bH}{\bm{H}}
\newcommand{\BIGOP}[1]{\mathop{\mathchoice%
{\raise-0.22em\hbox{\huge $#1$}}%
{\raise-0.05em\hbox{\Large $#1$}}{\hbox{\large $#1$}}{#1}}}
\newcommand{\BIGboxplus}{\mathop{\mathchoice%
{\raise-0.35em\hbox{\huge $\boxplus$}}%
{\raise-0.15em\hbox{\Large $\boxplus$}}{\hbox{\large $\boxplus$}}{\boxplus}}}
\title{New Bounds for the Integer Carath\'{e}odory Rank}
\author[1]{Iskander Aliev}
\author[2]{Martin Henk}
\author[1]{Mark Hogan}
\author[2]{Stefan Kuhlmann}
\author[3]{Timm Oertel}
\affil[1]{Cardiff University, United Kingdom}
\affil[2]{Technische Universit\"{a}t Berlin, Germany}
\affil[3]{Friedrich-Alexander-Universität Erlangen-Nünrberg, Germany}
\date{}                     %% if you don't need date to appear
\begin{document}
	\maketitle
	
\noindent \textbf{Abstract.} 
Given a rational pointed $n$-dimensional cone $C$, we study the integer Carath\'{e}odory rank $\Cr(C)$ and its asymptotic form $\aCr(C)$, where we consider ``most'' integer vectors in the cone. The main result significantly  improves the previously known upper bound for $\aCr(C)$. 
We also study bounds on $\Cr(C)$ in terms of $\Delta$, the maximal absolute $n\times n$ minor of the matrix given in an integral polyhedral representation of $C$. If $\Delta\in\lbrace 1,2\rbrace$, we show $\Cr(C) = n$, and prove upper bounds for simplicial cones, improving the best known upper bound on $\Cr(C)$ for $\Delta\leq n$.

\section{Introduction}
A cone $C$ in $\R^n$ is  {\em rational} if there exists an integer ${m\times n}$ matrix $\bA$ such that 
	\begin{align}
		\label{set_rational_cone}
		C= \lbrace\bx\in \R^n : \bA\bx\geq\bm{0}\rbrace,
	\end{align}
	where ${\ve 0}$ is the zero vector and the inequality is componentwise.
The cone $C$ is {\em pointed } if $C\cap(-C)=\{{\ve 0}\}$, that is the origin is the vertex of $C$, or equivalently, $\bA$ has full column rank. The {\em dimension} of the cone $C$ is the cardinality of a maximal set of linearly independent vectors in $C$. 

Given a finite set $G=\{{\ve g}_1, \ldots, {\ve g}_t\}\subset \Z^n$,  the \emph{semigroup} $S$ with {\em generating set}  $G$ is defined as 
\begin{equation}\label{semigroup}
S = \{\lambda_1 {\ve g}_1+ \cdots +\lambda_t {\ve g}_t:  \lambda_1, \ldots, \lambda_t \in \Z_{\ge 0} \}.
\end{equation}

Let $C\subset \R^n$ be a rational pointed $n$-dimensional cone. The integer points in the cone $C$ form a semigroup $S=C\cap\Z^n$. Due to a result of van der Corput \cite{vandercorputhilberbasisunique1931} (see also \cite{Jeroslow78} and \cite{Schrijver81}), the semigroup $S$ has a uniquely determined inclusion--minimal finite generating set $H=H(C)$. The set $H$ has been traditionally referred to as the \emph{Hilbert basis} of $C$. In the theory of mathematical optimisation, Hilbert bases are strongly related to  {\em Totally Dual Integral (TDI)-systems} ~\cite[Chapter 22.3]{schrijvertheorylinint86}, and  {\em Graver bases} \cite{Aardal_Weismantel_Wolsey,Graver1975}. 

A classical theorem by Cara\-th\'eodory states that each point of the cone $C$ is a non-negative combination of at most $n$ vectors which lie on extreme rays of $C$.
Cook, Fonlupt, and Schrijver posed in~\cite{CookFS1986} the following question, analogous to the one answered by Cara\-th\'eodory's theorem:

\noindent
{\em -- What is the smallest $k$ such that every integer point of the cone $C$ can be expressed as a non-negative integer combination of at most $k$ vectors in the Hilbert basis~$H$?}

%{\color{blue}The paper says the following:
%{\em ``What is the smallest $h$ such that every integral $x\in C$ can be expressed as a non-negative integer combination of at most $h$ vectors in $\{a_1,\ldots,a_k\}$?''} (here $\{a_1,\ldots,a_k\}$ denotes the Hilbert basis $H(C)$) So I think we can keep your question as it is}
%

{
We will refer to $k$ as the \emph{integer Carath\'eodory rank} of $C$, and denote it by $\Cr(C)$. 
To formally define this quantity, we consider for any element ${\ve x}$ of the  semigroup $C\cap\Z^n$ its {\em representation length}
\begin{equation*}
\sigma({\ve x})=\min\{l: {\ve x}=\lambda_1{\ve h}_1+\cdots+\lambda_l{\ve h}_l, \lambda_i\in \Z_{\ge 0}, {\ve h}_i\in H(C) \}\,.
\end{equation*}
Then 
\begin{equation*}
\Cr(C)=\max\{\sigma({\ve x}): {\ve x}\in C \cap \Z^n \}\,.
\end{equation*}
}

%We will refer to $k$ as the \emph{integer Carath\'eodory rank} of $C$, and denote it by $\Cr(C)$.
%
%Following \cite{brunsgubeladzenormalpoly1999},  for any element ${\ve x}$ of the  semigroup $S=C\cap\Z^n$ we define its {\em representation length}
%\begin{equation*}
%\sigma({\ve x})=\min\{l: {\ve x}=\lambda_1{\ve h}_1+\cdots+\lambda_l{\ve h}_l, \lambda_i\in \Z_{\ge 0}, {\ve h}_i\in H(C) \}\,.
%\end{equation*}
%Then the \emph{integer Carath\'eodory rank of the cone $C$} is defined as
%\begin{equation*}
%\Cr(C)=\max\{\sigma({\ve x}): {\ve x}\in S \}\,.
%\end{equation*}

% (a rigorous definition for  $\Cr(C)$ is given in Section \ref{ss_notation_def}).
 %~\cite{CookFS1986}  and general \emph{integer cones}~\cite{alievAverkovLoeraOertel21,alievdeloesparelindio2017,eisenbrandshmonincaratheodorybounds06}. 
%

The paper~\cite{CookFS1986} gave the upper bound $\Cr(C) \le 2n-1$ which was, subsequently, applied in the context of  {\em TDI-systems}, {\em integer rounding property} of integer programs, {\em independent sets} of \emph{matroids}, %~\cite{CookFS1986,pinamatroidcararank2003,gijswijtipdicp2012}, 
and {\em coverings} of {\em perfect graphs}. The current best known upper bound 
\begin{equation}\label{Sebo}
\Cr(C) \le 2n-2
\end{equation}
 was obtained by Seb\H{o} in~\cite{sebohilbertbasisdreidim90}. We also remark that the notion of the integer Carath\'eodory rank was extended by Eisenbrand and Shmonin \cite{eisenbrandshmonincaratheodorybounds06} to general semigroups $S$ of the form \eqref{semigroup}. 
See also \cite{alievAverkovLoeraOertel21,alievdeloesparelindio2017} and references within.

Following the work of Bruns and Gubeladze \cite{brunsgubeladzenormalpoly1999}, we say that a pointed rational $n$-dimensional cone $C\subset \R^n$
satisfies the \emph{Integral Carath\'eodory Property} (ICP) if $\Cr(C) = n$.
It was conjectured in \cite{sebohilbertbasisdreidim90} that the ICP holds for every $n$-dimensional cone $C$. This conjecture was disproved by Bruns et al. in \cite{brunsgubehenkcounterexampleintcara99}. Specifically, it was shown in \cite{brunsgubehenkcounterexampleintcara99} that in every dimension $n\ge 6$ there exists an $n$-dimensional cone $C$ with $\Cr(C) \ge \left \lfloor7n/6 \right\rfloor$.

{
To study the ``typical'' maximal representation length, Bruns and Gubeladze introduced in \cite{brunsgubeladzenormalpoly1999} the \emph{asymptotic integer Carath\'eodory rank $\aCr(C)$} of the cone $C$, which is defined as the smallest positive integer $k$ such that the following limit exists
and satisfies the equality
\begin{equation*}
\lim_{\delta\rightarrow \infty}\frac{|\{{\ve x}\in S: \sigma({\ve x})\le k\}\cap [-\delta, \delta]^n |}{|S\cap [-\delta, \delta]^n |}=1\,.
\end{equation*}
That is ``most'' vectors in $C \cap \Z^n$ can be represented by at most $k$ Hilbert basis elements.

}

Clearly, $\aCr(C)\le \Cr(C)$. It was shown in \cite{brunsgubeladzenormalpoly1999} that 
\begin{equation}\label{BG_asymptotic}
\aCr(C) \le 2n-3\,
\end{equation}
and that in every dimension $n\geq 6$ there exists an $n$-dimensional cone $C$  with $\aCr(C)>n$.

Known results on the integer Carath\'eodory rank lead to two interesting and long--standing open questions:

\noindent
{\em -- What are the optimal upper bounds for $\Cr(C)$ and  $\aCr(C)$ in terms of $n$?}
In the case of  $\Cr(C)$, Seb\H{o}'s bound \eqref{Sebo}  remains the best known upper estimate for over three decades. Further, the work of Gubeladze \cite{gubeladzesurveynormal2023} indicates that reducing \eqref{Sebo} to a bound of the form $\Cr(C)\le (2-\epsilon)n$ with $\epsilon>0$ for all sufficiently large $n$ is a challenging problem. In particular, it would disprove a conjecture (\cite[Conjecture 2.1]{gubeladzesurveynormal2023}) on the integer Carath\'eodory rank of normal polytopes.

In this paper, we study the above question in the case of the asymptotic integer Carath\'eodory rank. 
Theorem \ref{asymUpBound} reduces the bound \eqref{BG_asymptotic} to $\aCr(C) \le \lfloor 3n/2\rfloor$. On the other hand, Theorem \ref{asymLowBound} shows that in every dimension $n\ge 6$ there exists an $n$-dimensional cone $C$ with $\aCr(C) \ge \lfloor 7n/6\rfloor$. 

\noindent
{\em -- What cones have the integer Carath\'eodory property?} 
Cook, Fonlupt, and Schrijver ~\cite{CookFS1986} observed that the ICP holds for two-dimensional cones. Subsequently, Seb\H{o} \cite{sebohilbertbasisdreidim90} proved the ICP for cones of dimension three. On the other hand, due to the result of Bruns et al. \cite{brunsgubehenkcounterexampleintcara99}, there exist cones that do not satisfy the ICP for every $n\geq 6$. Despite this, it remains an active line of research to classify which cones admit the ICP; see \cite{pinamatroidcararank2003,gijswijtipdicp2012} for some results concerning cones related to matroids. We continue this line of research by investigating the integer Carath\'eodory rank in terms of the parameter
\begin{align*}
	\Delta(\bA) = \max \left\lbrace \left|\det \bB\right| : \bB \text{ is an } n\times n \text{ submatrix of }\bA\right\rbrace,
\end{align*}
where $\bA\in\Z^{m\times n}$ has full column rank. We refer to $\bA$ as \textit{$\Delta$-modular} if $\Delta(\bA)=\Delta$. 

%Motivation Delta
Recently, significant effort has been made to understand the computational complexity of integer programming problems defined by $\Delta$-modular matrices. Three key results in this area are given in \cite{artmannweiszen17,Fiorini2022IntegerPW,naegelesanzencongruence2022}. This task motivated the study of polyhedral geometry depending on the parameter $\Delta(\bA)$; see \cite{bonisummaeisenbranddiameterpoly14,celayakuhlpaarweis22,celayakuhlpaatweis2022proxandflatness,HKW2022} for an incomplete collection of results concerning the distance of optimal integral solutions of an integer linear program and optimal vertex solutions of the corresponding relaxation, the lattice width of lattice-free polyhedra, and the diameter of polyhedra. In some of the recent advancements, in particular \cite{celayakuhlpaatweis2022proxandflatness,HKW2022}, Hilbert bases play a central role when proving novel upper bounds which solely depend on $\Delta(\bA)$.

%Motivation simplicial
An intriguing special case are \emph{simplicial cones}, that means cones, where $\bA\in\Z^{n\times n}$ in (\ref{set_rational_cone}) satisfies $\det\bA\neq 0$. Even for simplicial cones it is open whether they admit the ICP. An affirmative answer to this has, combined with some extra effort, the following strong implication: the integer vectors contained in the zonotope spanned by the primitive generators of a non-simplicial cone have the ICP. 
In addition to this, the study of simplicial cones and the ICP relates to various other concepts in mathematics such as simplices with the integer decomposition property which themselves are connected to weighted projective spaces; see for instance \cite{braun2018detecting,conrads2002weighted}.

In this paper, we consider the above question of which cones admit the ICP independently from the dimension of the cone. Theorem \ref{thm_outer_unimod_bimod} shows that the ICP holds in arbitrary dimension for cones with $\Delta(\bA)\in\lbrace 1,2\rbrace$. We further strengthen this for simplicial cones and obtain an improvement on the bound \eqref{Sebo} if $\Delta(\bA)\leq n$; see Theorem \ref{thm_main_simplicial_polyhedral}.

	In what follows, by $\intt X$ we denote the interior of a set $X$, $\lin X$ is the linear hull of $X$, $\pos X$ is the positive hull of $X$, and $\aff X$ is the affine hull of $X$. 
%	A \textit{lattice} $\Lambda$ is a discrete additive subgroup of $\R^n$. A \emph{basis} of $\Lambda$ consists of $k$ linearly independent vectors $\bb_1,\ldots,\bb_k\in \Lambda$ such that $\Lambda = \bB\Z^k$ where $\bB = (\bb_1,\ldots,\bb_k)$ denotes the matrix with columns $\bb_1,\ldots,\bb_k$. If $k = n$, the lattice is full-dimensional. The determinant $\det(\Lambda)$ of $\Lambda$ is given by $\det(\Lambda) = \sqrt{\det\bB^T\bB}$.	
%	The vectors $\bb_1^*,\ldots,\bb_k^*\in \lin\Lambda$ form the \textit{dual basis} to $\bb_1,\ldots,\bb_k$ if 
%	\begin{align*}
%		\bb_i^\top\bb_j^* = \begin{cases}
%			1, \text{ for } i = j,\\
%			0, \text{ otherwise}\,.
%		\end{cases}.
%	\end{align*}
%	The \textit{dual lattice} $\Lambda^*$ is defined as the lattice with basis  $\bb_1^*,\ldots,\bb_k^*$. We can write
%	\begin{align*}
%		\Lambda^* = \lbrace \bx\in \lin \Lambda : \by^\top\bx\in \Z \text{ for all }\by\in\Lambda\rbrace.
%	\end{align*}
%Further, we have $\det \Lambda^* = (\det \Lambda)^{-1}$ and in the case $k = n$, the dual basis of $\bB= (\bb^1,\ldots,\bb^n)$ is given by the columns of $\bB^{-\top}$. For a general introduction to the theory of lattices, we refer the reader to~\cite{GruLek87}.
%	
%
We use the notation $\lbrack m\rbrack$ for the set $\lbrace 1,\ldots,m\rbrace$. Given $\bA\in\Z^{m\times n}$, $I\subseteq\lbrack m \rbrack$, and $J\subseteq \lbrack n\rbrack$, we denote by $\bA_{I,J}$  the submatrix of $\bA$ with rows indexed by $I$ and columns indexed by $J$. If $J=\lbrack n\rbrack$, we write $\bA_{I,\cdot}$ and similarly $\bA_{\cdot,J}$ when $I = \lbrack m\rbrack$.
%In the case $I=\lbrace i\rbrace$, we use $\ba_i$ instead of $\bA_{\lbrace i\rbrace,\cdot}$. 
%	%{\color{red}In the case $I=\lbrace i\rbrace$, we use $\ba_i$ instead of $\bA_{\lbrace i\rbrace,\cdot}$. DO WE NEED THIS}
%	
In the same manner, given a vector $\bx\in\R^n$ and a set $I\subseteq\lbrack n\rbrack$, we denote by $\bx_I\in\R^{|I|}$ the vector with coordinates indexed by $I$. The \textit{support of $\bx$} is defined as 
	$
		\supp(\bx) = \lbrace i\in\lbrack n\rbrack : \bx_{\{i\}} \neq 0\rbrace.
	$	
	We denote by $\GL(n,\Z)$ the group of all $n\times n$ unimodular matrices, that is $\bA \in\Z^{n \times n}$ and $\left|\det \bA\right| = 1$. The standard unit vectors in $\R^n$ are denoted by $\be_1,\ldots,\be_n$.

\section{Statement of results}\label{Results}

Our main result strengthens the bound \eqref{BG_asymptotic} obtained by Bruns and Gubeladze  \cite{brunsgubeladzenormalpoly1999}. 

\begin{theorem}\label{asymUpBound}
Let $C$ be a rational pointed $n$-dimensional cone.
Then
\[
\aCr(C)\le\left\lfloor\tfrac{3}{2}n\right\rfloor.
\]
\end{theorem}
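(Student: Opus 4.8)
The plan is to localise the asymptotic statement to a single simplicial cone and then translate it into a covering problem in a finite abelian group. First I would fix a triangulation of $C$ into simplicial cones $K=\pos\{\bv_1,\dots,\bv_n\}$ whose generators $\bv_i$ lie in $H(C)$; this is possible since the rays through the Hilbert basis span $C$. The union of the walls of this triangulation meets $\Z^n$ in a set of density zero, so in the limit defining $\aCr$ I may assume $\bx$ lies in the interior of one fixed maximal cell $K$. Writing $\bx=\sum_i\lambda_i\bv_i$ with $\lambda_i>0$, a second density-zero reduction lets me assume $\bx$ is \emph{deep}: $\lambda_i\ge M$ for all $i$ and an arbitrarily large fixed $M$, since for fixed $M$ the shallow points have density $O(1/M)$.

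Next I decompose $\bx=\sum_i\lfloor\lambda_i\rfloor\bv_i+\br$, where $\br=\sum_i\{\lambda_i\}\bv_i$ is the integer point representing $\bx$ in the half-open fundamental parallelepiped $\Pi(K)$. The crucial point is that, because each $\lfloor\lambda_i\rfloor$ is large, I am free to add arbitrary integer (possibly negative) multiples of the $\bv_i$ to any representation of $\br$ without forcing a coefficient of $\bv_i$ to become negative. Hence, setting $\Lambda=\sum_i\Z\bv_i$ and $G=\Z^n/\Lambda$, a finite abelian group of order $|\det(\bv_1,\dots,\bv_n)|$, if the class $\bar\br\in G$ lies in the non-negative span of classes $\bar\bw_1,\dots,\bar\bw_m$ with $\bw_k\in H(C)$, then $\bx=\sum_i a_i\bv_i+\sum_{k}d_k\bw_k$ with every $a_i>0$ and $d_k\ge 0$, giving $\sigma(\bx)\le n+m$. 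Writing $m(\bar\br)$ for the least such $m$, I reduce the theorem to the uniform estimate $m(\bar\br)\le\lfloor n/2\rfloor$, valid for every class: the remainders $\br$ equidistribute over $\Pi(K)\cap\Z^n$, so each of the $|G|$ classes is attained by a positive-density set of deep points and none may be neglected.

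The heart of the proof is therefore the claim that every class of $G$ is a non-negative combination of at most $\lfloor n/2\rfloor$ images of Hilbert basis elements, and this is the step I expect to be hardest. Crude estimates are useless here: the images $\bar\bh$ generate $G$, but reaching an arbitrary class from a generating set costs up to $n$ generators, which only reproduces a bound of order $2n$; likewise the degree count with the grading $\phi(\bv_i)=1$, under which $\phi(\br)=\sum_i\{\lambda_i\}<n$, does not yield the factor $\tfrac12$. To gain it I would exploit the central symmetry $\br\mapsto(\sum_i\bv_i)-\br$ of $\Pi(K)$: for a positive-density set of remainders one of $\br$ and its complement has degree at most $n/2$, and complementary lattice points pair the coordinate directions two at a time. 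The plan is to turn this into a matching on $[n]$ in which each pair of generators is corrected by a single Hilbert basis element, so that $\lfloor n/2\rfloor$ elements suffice.

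The genuine obstacle is the asymmetry between the two roundings: rounding $\lambda_i$ down produces a clean remainder inside the cone, whereas passing to the complementary point corresponds to rounding up and formally introduces a subtraction of a Hilbert basis element, which need not be an integer combination of the $\bv_i$ and so cannot simply be cancelled. Overcoming this — either by a direct argument realising the complementary saving inside the semigroup, or by replacing the single fixed triangulation with a best-choice over the several simplicial cones spanned by Hilbert basis elements that contain a given deep $\bx$ — is the crux. Once the uniform bound $m(\bar\br)\le\lfloor n/2\rfloor$ is in hand, assembling the density estimates for the boundary and the shallow points is routine and yields $\aCr(C)\le n+\lfloor n/2\rfloor=\lfloor 3n/2\rfloor$.
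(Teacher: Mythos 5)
Your skeleton matches the paper's proof up to the decisive step, and the place you yourself flag as ``the crux'' is exactly where the gap lies: you never prove the uniform bound $m(\bar{\ve r})\le\lfloor n/2\rfloor$, nor resolve the round-up subtraction, and without that step your reduction yields nothing beyond bounds of order $2n$. Two ideas are missing. First, a fixed triangulation gives no certificate that representations of the remainder are short: your grading $\phi$ with $\phi(\bv_i)=1$ need not satisfy $\phi(\bh)\ge 1$ for Hilbert basis elements $\bh$ lying outside the cell $K$, so even in the round-down case you cannot conclude that $\ve r=\sum_i\{\lambda_i\}\bv_i$ admits an integral representation using at most $\sum_i\{\lambda_i\}$ elements. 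The paper avoids this by choosing the $n$ generators not from a triangulation but as an optimal vertex basis of the linear program $\max\{x_1+\cdots+x_t : \bH\bx=\bb,\ \bx\ge\bm{0}\}$ over the whole Hilbert basis matrix $\bH$; optimality of that basis means the dual functional is $\ge 1$ on \emph{every} Hilbert basis element, whence any nonnegative integral representation of the round-down remainder $\ve r$ (or of the round-up remainder $\ve s$) has coefficient sum at most $\sum_i\mu_i$ (resp.\ $\sum_i\gamma_i$), and since $\mu_i+\gamma_i\le 1$ one of these sums is at most $n/2$ --- which is precisely your central-symmetry observation, made rigorous.

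Second, the subtraction obstacle has a concrete resolution that your ``matching on $[n]$'' sketch does not supply. Let $q=\left|\det\bH_{\cdot,[n]}\right|\le\Delta$ and $\gamma_i=\lceil\lambda_i\rceil-\lambda_i$. By Cramer's rule $q\bm{\gamma}$ is integral, and deepness ($\lambda_i\ge\Delta\ge(q-1)\gamma_i$) makes $\eta_i=\lambda_i-(q-1)\gamma_i=\lceil\lambda_i\rceil-q\gamma_i$ a nonnegative integer, giving $\bb=\sum_{i=1}^n\eta_i\bh_i+(q-1)\ve s$: rather than subtracting $\ve s$ once, you \emph{add} it $q-1$ times, at no cost in support because $\ve s\in C\cap\Z^n$ itself has a representation using fewer than $n/2$ Hilbert basis elements. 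Note also that the paper never needs (and does not prove) your stronger group-covering claim that every class of $\Z^n/\Lambda$ is a nonnegative combination of at most $\lfloor n/2\rfloor$ Hilbert images --- classes represented only by high-degree points of the parallelepiped would obstruct it; the argument only bounds the two specific remainders attached to a given deep point, and that suffices. As it stands, your proposal is an accurate reconstruction of the frame of the proof, but the theorem remains unproved in it.
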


The second result gives a new lower bound for the maximal value of the asymptotic integer Carath\'eodory rank of an $n$-dimensional cone.
\begin{theorem}\label{asymLowBound}
For every integer $n\ge 6$ there exists a rational pointed $n$-dimensional cone $C_n$ such that
\begin{equation}\label{asympt_lower_bound}
\aCr(C_n)\ge\left\lfloor\tfrac{7}{6}n\right\rfloor.
\end{equation}
\end{theorem}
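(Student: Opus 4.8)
The plan is to realise the bound by an orthogonal direct sum (Cartesian product in pairwise orthogonal coordinate blocks) of a fixed low-dimensional gadget. Write $n = 6k + r$ with $k = \lfloor n/6\rfloor \ge 1$ and $r \in \{0,\dots,5\}$, and record the elementary identity $\lfloor 7n/6\rfloor = 7k + r$ (checking the six residues $r=0,\dots,5$ directly). Let $C_6\subset\R^6$ be a pointed rational cone with $\aCr(C_6) > 6$, whose existence is guaranteed by \cite{brunsgubeladzenormalpoly1999}. I would set
\[
C_n = \underbrace{C_6\times\cdots\times C_6}_{k}\times \R^r_{\ge 0},
\]
placed in pairwise orthogonal coordinate subspaces of $\R^n$ (omitting the last factor when $r=0$). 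Then $C_n$ is a pointed rational $n$-dimensional cone, and it remains to prove $\aCr(C_n)\ge 7k+r$.

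First I would record the behaviour of the Hilbert basis and of the representation length $\sigma$ under orthogonal direct sums. Since $C_n\cap\Z^n = (C_6\cap\Z^6)^{k}\times(\R^r_{\ge 0}\cap\Z^r)$ and any element with two nonzero blocks is the sum of its block-restrictions, every irreducible element is supported in a single block; hence the Hilbert basis of $C_n$ is the disjoint union of the block Hilbert bases, and $\sigma$ is additive across blocks, i.e.\ $\sigma_{C_n}(x)=\sum_{i=1}^{k}\sigma_{C_6}(x_i)+|\supp(y)|$ for $x=(x_1,\dots,x_k,y)$. Consequently, if each $x_i$ lies in the ``bad'' set $B:=\{z\in C_6\cap\Z^6:\sigma_{C_6}(z)\ge 7\}$ and $y$ has full support, then $\sigma_{C_n}(x)\ge 7k+r=\lfloor 7n/6\rfloor$.

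The heart of the argument is a density computation showing that $\mathrm{Bad}:=B^{k}\times\{y:|\supp(y)|=r\}$ is not negligible. Because both $\mathrm{Bad}$ and the box $[-\delta,\delta]^n$ factor over the blocks, the counting ratio factorises:
\[
\frac{|\mathrm{Bad}\cap[-\delta,\delta]^n|}{|(C_n\cap\Z^n)\cap[-\delta,\delta]^n|}
=\left(\frac{|B\cap[-\delta,\delta]^6|}{|(C_6\cap\Z^6)\cap[-\delta,\delta]^6|}\right)^{\!k}
\cdot\frac{|\{y:|\supp(y)|=r\}\cap[-\delta,\delta]^r|}{|(\R^r_{\ge 0}\cap\Z^r)\cap[-\delta,\delta]^r|}.
\]
The last factor tends to $1$, since integer points of $\R^r_{\ge 0}$ with a vanishing coordinate have density $0$. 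By definition of the asymptotic rank, $\aCr(C_6)>6$ says exactly that the density of $\{\sigma_{C_6}\le 6\}$ does not converge to $1$; equivalently the first factor has positive $\limsup$, say $\ge c>0$. Then $\aCr(C_n)\ge\lfloor 7n/6\rfloor$ follows, because the density of $\{x\in C_n\cap\Z^n:\sigma_{C_n}(x)\le\lfloor 7n/6\rfloor-1\}$ cannot converge to $1$.

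The only genuinely delicate point is the interaction of $\limsup$ with the product: a priori $\aCr(C_6)>6$ yields only a positive $\limsup$ (not a limit) of the $B$-density. I would therefore extract a single subsequence $\delta_m\to\infty$ along which the $B$-factor approaches its $\limsup$, and use that the orthant factor has a genuine limit $1$, so that the displayed ratio converges to a value $\ge c^{\,k}>0$ along $\delta_m$; this suffices to obstruct convergence to $1$ of the ``short representation'' density. If \cite{brunsgubeladzenormalpoly1999} in fact supplies a positive asymptotic density for $B$ (rather than merely a positive $\limsup$), this step simplifies and the densities multiply outright. The remaining ingredients---splitting of Hilbert bases and additivity of $\sigma$ over orthogonal blocks, and the density-$1$ statement for full-support points in the orthant---are routine.
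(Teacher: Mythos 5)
Your proof is correct, and the underlying construction is the same as the paper's: $C_n$ is a product of $\lfloor n/6\rfloor$ copies of a six-dimensional gadget $C_6$ together with a remainder cone occupying the last $r = n \bmod 6$ coordinates. The difference is in how the product lower bound is justified. The paper first obtains $\aCr(C_6)=7$ by combining $\Cr(C_6)=7$ from \cite{brunsgubehenkcounterexampleintcara99} with Theorem 6.1 of \cite{brunsgubeladzenormalpoly1999} (if $\aCr(C)=n$ then $\Cr(C)=n$), and then cites Lemma 4.4 of \cite{brunsgubeladzenormalpoly1999}, the additivity $\aCr(C\times C')=\aCr(C)+\aCr(C')$, as a black box. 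You instead start from the weaker input $\aCr(C_6)>6$ and re-prove, from scratch, the one direction of additivity you actually need: the splitting of the Hilbert basis and of $\sigma$ over coordinate blocks, the factorization of the counting ratio over the blocks, and the subsequence extraction that correctly handles the subtlety that $\aCr(C_6)>6$ a priori yields only a positive $\limsup$ of the bad-set density rather than a limit. Your version is more self-contained and makes explicit a step the paper leaves implicit: with Lemma 4.4 one gets $\aCr(C_n)=7\lfloor n/6\rfloor+\aCr(C')$ and still needs $\aCr(C')\ge r$ for the remainder cone, which the paper does not spell out, whereas you fix $C'=\R^r_{\ge 0}$ and verify directly that $\sigma(y)=\left|\supp(y)\right|$ takes the value $r$ on a density-one set. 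It is also robust to the possibility that the defining limit in the definition of $\aCr$ fails to exist, a point your $\limsup$ bookkeeping addresses cleanly. The paper's route buys brevity by leaning on the two cited results; yours buys transparency at the cost of a page of routine but carefully executed density arguments.
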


%To state our remaining results, let $\bA\in\Z^{m\times n}$ be a full column rank matrix  and let 
%\begin{align*}
%		\Delta(\bA) = \max \left\lbrace \left|\det \bB\right| : \bB \text{ is an } n\times n \text{ submatrix of }\bA\right\rbrace.
%	\end{align*}
%
%We refer to $\bA$ as \textit{$\Delta$-modular} if $\Delta(\bA)=\Delta$. In the cases $\Delta=1$ and $\Delta=2$ the matrix $\bA$ will be called \textit{unimodular} and \textit{bimodular}, respectively.

%Recently, significant effort has been made to understand the computational complexity of integer programming problems defined by $\Delta$-modular matrices. Three key results in this area are given in \cite{artmannweiszen17,Fiorini2022IntegerPW,naegelesanzencongruence2022}. This task motivated the study of polyhedral geometry depending on the parameter $\Delta(\bA)$; see \cite{bonisummaeisenbranddiameterpoly14,celayakuhlpaatweis2022proxandflatness,celayakuhlpaarweis22,HKW2022} for an incomplete collection of results concerning the proximity of optimal integral solutions of an integer linear program and optimal vertex solutions of the corresponding relaxation, the lattice width of lattice-free polyhedra, and the diameter of polyhedra. In some of the recent advancements, in particular \cite{celayakuhlpaatweis2022proxandflatness,HKW2022}, Hilbert bases play a central role when proving novel upper bounds which solely depend on $\Delta(\bA)$. 
%
To state our parameterized results, we consider a rational pointed cone
	\begin{align*}
		C(\bA) = \lbrace\bx\in \R^n : \bA\bx\geq\bm{0}\rbrace\,
	\end{align*}
%Note that $C(\bA)$ is pointed as $\bA$ has full column rank. 
and estimate its integer Carath\'{e}odory rank in terms of the parameter $\Delta(\bA)$.

	Firstly, we show that the ICP holds for the cone $C(\bA)$ if all $n\times n$ subdeterminants of $\bA$ are bounded by two.
	\begin{theorem}
		\label{thm_outer_unimod_bimod}
		Let $\bA\in\Z^{m\times n}$ be a matrix of full column rank with $\Delta(\bA)\le 2$. Then $\Cr(C(\bA))=n$.
	\end{theorem}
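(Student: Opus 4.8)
The plan is to establish $\Cr(C(\bA))\ge n$ and $\Cr(C(\bA))\le n$ separately; only the upper bound uses $\Delta(\bA)\le 2$. For the lower bound, note that $C(\bA)$ is $n$-dimensional, so the finitely many subcones $\pos\{\bh_{i_1},\dots,\bh_{i_k}\}$ spanned by at most $n-1$ Hilbert basis elements lie in proper linear subspaces and therefore cannot cover $\intt C(\bA)$; any interior integer point avoiding all of them has $\sigma(\bx)\ge n$, giving $\Cr(C(\bA))\ge n$.

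For the upper bound I would first treat the simplicial case $m=n$, where $\bA$ is invertible. The linear map $\bz\mapsto\bA\bz$ is a semigroup isomorphism from $C(\bA)\cap\Z^n$ onto $S_\Lambda:=\Lambda\cap\Z^n_{\ge 0}$, where $\Lambda=\bA\Z^n$ has index $\Delta(\bA)$ in $\Z^n$; since the representation length $\sigma$ and the Hilbert basis are intrinsic to a semigroup, it suffices to bound $\sigma$ on $S_\Lambda$. If $\Delta(\bA)=1$ then $S_\Lambda=\Z^n_{\ge 0}$ and the claim is immediate. If $\Delta(\bA)=2$, then $\Lambda=\{\bw:\sum_{i\in I}w_i\text{ even}\}$ for some nonempty $I\subseteq[n]$, so $S_\Lambda=\{\bw\in\Z^n_{\ge 0}:\sum_{i\in I}w_i\text{ even}\}$, whose Hilbert basis is $\{\be_l:l\notin I\}\cup\{2\be_i:i\in I\}\cup\{\be_i+\be_j:i,j\in I,\,i<j\}$. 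Each coordinate outside $I$ costs one generator, so the decisive step is to show that any $\bv\in\Z^{I}_{\ge 0}$ with even coordinate sum is a nonnegative integer combination of at most $|I|$ of the generators $2\be_i$ and $\be_i+\be_j$. I would prove this by a pairing argument: the set of coordinates where $\bv$ is odd has even cardinality, so pair it up, and for a pair $(i,j)$ with $v_i\le v_j$ use $v_i$ copies of $\be_i+\be_j$ and $(v_j-v_i)/2$ copies of $2\be_j$ (at most two distinct generators, eliminating coordinate $i$), and a single loop $2\be_i$ for each remaining even coordinate. This uses at most $|I|$ distinct generators, whence $\sigma\le |I|+(n-|I|)=n$ throughout $S_\Lambda$.

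For a general, non-simplicial cone I would reduce to the simplicial case. Given $\bx\in C(\bA)\cap\Z^n$, place it by Carath\'eodory's theorem in a simplicial subcone $\tau=\pos\{\br_1,\dots,\br_n\}$ spanned by extreme rays of $C(\bA)$. If the lattice determinant of $\br_1,\dots,\br_n$ is at most $2$, the simplicial analysis applies: $\bx$ is a combination of at most $n$ elements of $H(\tau)$, the rays $\br_i$ are extreme rays of $C(\bA)$ and hence lie in $H(C(\bA))$, and the only delicate element is the auxiliary half-integral point $\bm{g}=\tfrac12\sum_{i\in K}\br_i$ attached to a bimodular piece. To show $\bm{g}\in H(C(\bA))$, suppose $\bm{g}=\ba+\bb$ with $\ba,\bb\in C(\bA)\cap\Z^n\setminus\{\bm{0}\}$; then every facet normal of $C(\bA)$ vanishing on $\bm{g}$ vanishes on $\ba$ and $\bb$, forcing $\ba,\bb$ into the minimal face $F$ containing $\bm{g}$. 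If $F=\pos\{\br_i:i\in K\}$, then irreducibility of $\bm{g}$ inside this smaller bimodular cone (again by the simplicial case) yields a contradiction, so $\bm{g}\in H(C(\bA))$ and $\bx$ is expressed through at most $n$ elements of $H(C(\bA))$.

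The main obstacle is precisely the non-simplicial reduction: one must secure, using $\Delta(\bA)\le 2$, both that $\bx$ lies in a subcone $\tau$ on extreme rays of lattice determinant at most $2$, and that the support rays $\{\br_i:i\in K\}$ of each auxiliary point span a \emph{face} of $C(\bA)$ (equivalently $H(\tau)\subseteq H(C(\bA))$ for the pieces used). Both fail for general cones and so cannot follow from any soft combinatorial property of triangulations; I expect them to require genuinely exploiting the hypothesis, most conveniently in the reformulation that every projection of $\Lambda=\bA\Z^n$ onto $n$ of the $m$ coordinates has index at most $2$ in $\Z^n$. Establishing this compatibility is the step I anticipate to be hardest, the simplicial pairing argument above being the comparatively routine technical core.
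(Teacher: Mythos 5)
Your lower bound and your simplicial case ($m=n$) are correct, and the simplicial argument is genuinely different from the paper's: the reduction to $S_\Lambda=\Lambda\cap\Z^n_{\ge 0}$ with $\Lambda=\bA\Z^n$ of index at most $2$, the explicit Hilbert basis $\{\be_l: l\notin I\}\cup\{2\be_i : i \in I\}\cup\{\be_i+\be_j: i,j\in I\}$, and the pairing count are all sound. But the theorem concerns general $\bA\in\Z^{m\times n}$, and your non-simplicial reduction is not a proof; it is a plan whose two load-bearing claims you yourself flag as unestablished: (i) that every integer point of $C(\bA)$ lies in a simplicial subcone spanned by primitive extreme-ray generators of lattice determinant at most $2$, and (ii) that the Hilbert basis elements of such a subcone --- in particular your auxiliary half-sum point $\bm{g}=\tfrac12\sum_{i\in K}\br_i$ --- belong to $H(C(\bA))$. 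Neither follows from $\Delta(\bA)\le 2$ along the lines you sketch. The hypothesis bounds the $n\times n$ minors of the \emph{facet} matrix, whereas your triangulation needs determinant control on the \emph{generator} (ray) matrix; there is no cheap transfer between the two descriptions, since for $m>n$ the primitive ray generators have entries given by $(n-1)\times(n-1)$ minors of $\bA$, and the determinants of $n$ of them are not bounded by $\Delta(\bA)$ in any evident way. Your fallback for (ii) also has a hole: from $\bm{g}=\ba+\bb$ you force $\ba,\bb$ into the minimal face of $C(\bA)$ containing $\bm{g}$, but you then need that face to equal $\pos\{\br_i : i\in K\}$, i.e., that the support rays of the auxiliary point span a face of $C(\bA)$ --- which is precisely the unproved compatibility. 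So as written, the proposal establishes the statement only for $m=n$, and the case $m>n$, which is the substance of the theorem, remains open.

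For contrast, the paper avoids rays and triangulations entirely and instead descends through faces, inducting on dimension. It first shows that $P_{\bm{1}}(\bA)=\lbrace \bx : \bm{0}\le \bA\bx\le\bm{1}\rbrace$ contains a nonzero integer point: for $\Delta(\bA)=1$ because all vertices of $P_{\bm{1}}(\bA)$ are integral, and for $\Delta(\bA)=2$ via the Veselov--Chirkov integer feasibility theorem applied facet-wise, with a case split on whether the facet normal $\ba$ has $\gcd(\ba)=1$ or $2$. Lemma \ref{lemma_outer_walk_to_face} then shows that a nonzero integer point of $P_{\bm{1}}(\bA)$ with minimal $|\supp(\bA\bh)|$ is itself a Hilbert basis element, and subtracting an integer multiple of it moves $\bz$ to a proper face; Lemma \ref{lemma_outer_fulldim_polyhedra} projects that face unimodularly to a lower-dimensional cone that is again at most $\Delta(\bA)$-modular, with lattice points in bijection. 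Each iteration spends exactly one Hilbert basis element and drops the dimension by at least one, giving the bound $n$. If you wish to salvage your approach, this face-descent mechanism is what would have to replace your triangulation; your index-$2$ semigroup analysis is a clean standalone verification of the simplicial case but does not shortcut the general one.
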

	Note that a counterexample for Seb\H{o}'s conjecture obtained in \cite{brunsgubehenkcounterexampleintcara99} has a polyhedral representation with $\Delta(\bA) = 144$. The smallest $2<\Delta(\bA)\le 144$ for which the ICP fails is not known. 
		
	Suppose now  that $\bA\in \Z^{n\times n}$ is a nonsingular matrix.  Then $C(\bA)$ is a simplicial cone and $ \Delta(\bA)=\left|\det\bA\right|$. In this setting, we obtain an upper bound for $\Cr(C(\bA))$ which combines the parameter $\Delta(\bA)$ with the dimension $n$. This results in an improvement on the bound \eqref{Sebo} for $\Delta(\bA)\leq n$.

	\begin{theorem}\label{thm_main_simplicial_polyhedral}
		Let $\bA\in\Z^{n\times n}$ be a nonsingular matrix.
		\begin{itemize}
			\item[(i)] If $1\leq \Delta(\bA) \leq 4$, then $\Cr(C(\bA))=n$.
			\item[(ii)] If $\Delta(\bA)\ge 5$, then $\Cr(C(\bA))\leq n + \Delta(\bA) - 3$. 
		\end{itemize}
		
	\end{theorem}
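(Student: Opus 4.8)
The plan is to pass to the dual-lattice picture, reduce every semigroup element to a ``fractional'' part, and then bound the number of distinct Hilbert basis elements needed through a zero-sum analysis in the finite group $\Z^n/\bA\Z^n$ of order $\Delta=\Delta(\bA)=|\det\bA|$. Since $C(\bA)$ is full-dimensional, any integer point in its interior forces the generators used in any representation to span $\R^n$, so $\sigma(\bx)\ge n$ and hence $\Cr(C(\bA))\ge n$ already; for part (i) it therefore suffices to prove the matching upper bound $\Cr(C(\bA))\le n$.

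First I would reduce to the nonnegative orthant. The linear map $\bx\mapsto\bA\bx$ is a semigroup isomorphism from $C(\bA)\cap\Z^n$ onto $(\bA\Z^n)\cap\R^n_{\ge 0}$ which carries the Hilbert basis to the Hilbert basis and preserves $\sigma$, hence $\Cr$. Writing $M=\bA\Z^n$ and $G=\Z^n/M$, we have $|G|=\Delta$, the images $g_i$ of the unit vectors $\be_i$ generate $G$, and I let $k_i$ denote the order of $g_i$. A vector $\bx\in\R^n_{\ge 0}$ lies in the image semigroup iff $\sum_i x_i g_i=0$ in $G$, and the Hilbert basis splits into the $n$ \emph{axis atoms} $k_i\be_i$ (the support-one elements) and the \emph{mixed atoms}, whose support has size at least two.

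Next I would peel off the integer part. Given $\bx$ in the semigroup, set $w_i=x_i\bmod k_i\in\{0,\dots,k_i-1\}$; then $\bw\ge 0$ still satisfies $\sum_i w_ig_i=0$, so $\bw$ lies in the semigroup, and $\bx=\bw+\sum_i\tfrac{x_i-w_i}{k_i}\,(k_i\be_i)$ adds only axis atoms. Because $w_i<k_i$, no axis atom can occur in any representation of $\bw$, so every atom used for $\bw$ is mixed; grouping the axis atoms of the two parts gives $\sigma(\bx)\le n+\sigma(\bw)$, where $\sigma(\bw)$ is the least number of \emph{distinct mixed atoms} representing $\bw$. Identifying $\bw$ with the zero-sum sequence over $G$ containing $w_i$ copies of $g_i$, the quantity $\sigma(\bw)$ becomes the minimal number of distinct blocks in a factorisation of this sequence into minimal zero-sum sequences, all of which are mixed since each color $i$ occurs fewer than $k_i$ times.

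The crux is to bound $\sigma(\bw)$. For part (ii) I would show $\sigma(\bw)\le\Delta-3$ whenever $\Delta\ge 5$: after collecting coordinates by their common image in $G$ (coordinates with equal $g_i$ are interchangeable), one factorises the zero-sum sequence greedily, reusing a single atom whenever the partial-sum trajectory repeats a value in $G$; since each color occurs fewer than $\Delta$ times and the trajectory takes at most $\Delta-1$ distinct nonzero values, a careful accounting—where the two further savings come from the generation of $G$ and from a forced coincidence among the shortest blocks—yields the claimed bound, and with $\sigma(\bx)\le n+\sigma(\bw)$ gives $\Cr(C(\bA))\le n+\Delta-3$. For part (i) with $\Delta\in\{3,4\}$ (the case $\Delta\le 2$ being Theorem~\ref{thm_outer_unimod_bimod}), the same grouping reduces the problem to the finitely many groups $\Z/3$, $\Z/4$ and $\Z/2\times\Z/2$ and their explicit mixed atoms; here I would replace the crude bound by a \emph{trading} argument—using a mixed atom with a large coefficient to absorb an entire coordinate's axis contribution, thereby eliminating one axis atom for each mixed atom introduced—to bring the total all the way down to $n$, matching the lower bound. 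I expect this trading step, together with isolating exactly where it saves the two extra atoms that produce the constant $-3$ in part (ii), to be the main obstacle; the reductions above are routine once the zero-sum dictionary is in place.
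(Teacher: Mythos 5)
Your reduction to the semigroup $\bA\Z^n\cap\R^n_{\ge 0}$, the splitting of the Hilbert basis into axis atoms $k_i\be_i$ and mixed atoms, and the estimate $\sigma(\bx)\le n+\sigma(\bw)$ for the coordinatewise remainder $\bw$ are all sound. But the crux claim --- that the number of \emph{distinct} mixed atoms needed for $\bw$ is at most $\Delta-3$ --- is false: it cannot be bounded by any function of $\Delta$ alone. Take $\Delta=5$, $n=5m$, and $M=\lbrace\bx\in\Z^n:\sum_i x_i\equiv 0 \pmod 5\rbrace$, which is $\bA\Z^n$ for a suitable $\bA$ with $|\det\bA|=5$; then $G\cong\Z/5\Z$, every $g_i=1$, and $k_i=5$. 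The mixed atoms are exactly the nonnegative integer vectors of coordinate sum $5$. For $\bw=(1,\ldots,1)^\top$ (which satisfies $w_i=1<k_i$, so it is precisely of the remainder type your peeling produces), any representation $\bw=\sum_a\lambda_a\bm{u}_a$ forces each $\bm{u}_a$ to be a $0/1$ vector of support size $5$ with $\lambda_a=1$ and the supports partitioning $\lbrack n\rbrack$; hence $\sigma(\bw)=m=n/5$, unbounded in $n$. The error in your sketch is treating blocks of a zero-sum factorisation as group-theoretic objects: atoms live in $\Z^n$, and two blocks with identical colour content drawn from different coordinates (plentiful when $n\gg\Delta$, since many coordinates share a colour) are \emph{distinct} Hilbert basis elements. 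Your trajectory/pigeonhole argument bounds the \emph{length} of a single minimal block (the Davenport constant, at most $|G|$), not the number of distinct blocks in a factorisation of a sequence of length up to $n(\Delta-1)$. With the central lemma false, the additive split ``$n$ axis atoms plus $f(\Delta)$ mixed atoms'' cannot work; any correct accounting must trade axis atoms against mixed atoms globally, which is exactly the step you defer as ``trading'' in part (i) and never supply for part (ii). A smaller point: your lower-bound remark is also wrong as stated, since a Hilbert basis element can lie in the interior of the cone (e.g.\ $(1,1)^\top$ for the cone spanned by $(1,0)^\top$ and $(1,2)^\top$, where $\sigma((1,1)^\top)=1$), though $\Cr\ge n$ itself is standard and not where the content lies.

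For contrast, the paper argues geometrically rather than through $\Z^n/\bA\Z^n$: a pigeonhole argument on the cosets of $\bA^{-1}\Z^n/\Z^n$ (Lemma \ref{former_claim}) shows that $P_{\bm{1}}(\bA)$ contains a nonzero integer point whenever $n\ge\Delta(\bA)$; Lemma \ref{lemma_outer_walk_to_face} turns a support-minimal such point into a Hilbert basis element along which one marches from $\bz$ to a proper face, and Lemma \ref{lemma_outer_fulldim_polyhedra} projects that face to a cone one dimension lower whose matrix is still at most $\Delta(\bA)$-modular. Iterating spends one Hilbert basis element per lost dimension until the dimension drops to $\Delta(\bA)-1$, where Seb\H{o}'s results finish: the ICP in dimension at most three for part (i), and the bound $2d-2$ for part (ii). Note that the paper's proof leans on these nontrivial base cases; your framework would have to reprove something comparable inside the zero-sum picture for the colourings (not just the groups) arising from $\Z/3$, $\Z/4$, and $\Z/2\times\Z/2$, and nothing in the proposal does so.
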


\section{Proofs of Theorem~\ref{asymUpBound} and~\ref{asymLowBound}}

\subsection{Proof of Theorem~\ref{asymUpBound}}
{To prove the upper bound for the asymptotic integer Carath\'eodory rank $\aCr(C)$, it is sufficient to  construct a set $D \subseteq C\cap\Z^n$ that satisfies the following two properties.
\begin{enumerate}
 \item[(i)] For any point ${\ve b}\in D$ we have  $\sigma({\ve b})\le 3n/2$.
 \item[(ii)] We have
\[
\lim_{\delta \to \infty} \frac{\left| D\cap[-\delta,\delta]^n\right|}{\left| C \cap \Z^n\cap[-\delta,\delta]^n\right|} = 1\,.
\]
 \end{enumerate}
%
%This then immediately implies $\aCr(C) \le  \lfloor 3n/2 \rfloor$.

Let us take the Hilbert basis $H(C)=\{\bh_1,\ldots,\bh_t\}$ of the cone $C$ and consider the matrix  $\bH\in\Z^{n\times t}$ with columns $\bh_1,\ldots,\bh_t$.
Let further
\begin{equation*}
\Delta=\max\left\{\left|\det\bH_{\cdot,I}\right|\;:\; I\subseteq[ t],\; |I|=n\right\}.
\end{equation*}
We consider the set
\[
D=C\cap\Z^n \setminus\bigcup_{\tau\in{[ t]\choose n-1}}\left\{\sum_{i=1}^t\lambda_i\bh_i \;:\;\begin{array}{l} 0\le\lambda_i \,\quad\quad\text{ for all }i\in\tau,\\0\le\lambda_i<\Delta\text{ for all }i\in[t]\setminus\tau \end{array}\right\}.
\]
Note that for each $\tau\in{[ t]\choose n-1}$ we can write
\[
\left\{\sum_{i=1}^t\lambda_i\bh_i \;:\;\begin{array}{l} 0\le\lambda_i \,\quad\quad\text{ for all }i\in\tau,\\0\le\lambda_i<\Delta\text{ for all }i\in[t]\setminus\tau \end{array}\right\} = P + C_\tau,
\]
where 
\[
P=\bH\cdot[0,\Delta)^t=\{ \bH{\ve x}: {\ve x}\in   [0,\Delta)^t  \}
\]
and  
\[
C_\tau=\left\{\sum_{i\in\tau}\lambda_i\bh_i \;:\; \lambda_i\ge 0 \right\}\,.
\]
Observe that $\left| C \cap \Z^n \cap[-\delta,\delta]^n\right| \in\Theta(\delta^n)$. 
Similarly, for any $\tau\in{[ t]\choose n-1}$ it holds that $\left| (P+C_\tau)\cap\Z^n \cap[-\delta,\delta]^n \right|\in \Theta(\delta^{k})$, where $k=\rank \bH_{\cdot,\tau}\le |\tau| < n$.
It holds
\[ \frac{\left| D \cap[-\delta,\delta]^n\right|}{\left| C \cap \Z^n\cap[-\delta,\delta]^n\right|} \ge  1-  \frac{
%\left| C\cap\Z^n  \cap[-\delta,\delta]^n \right| -
\sum_{\tau\in{[ t]\choose n-1}} \left|(P+C_\tau)\cap\Z^n \cap[-\delta,\delta]^n\right| }{\left| C \cap \Z^n\cap[-\delta,\delta]^n\right|} .
\]
The latter ratio tends to zero, as $\delta$ tends to infinity.
Hence, $D$ satisfies (ii).

Given  $\ve b \in \Z^n $, we will use the notation $Q(\bH, {\ve b})$ for the polyhedron 
\begin{equation*}
Q(\bH, {\ve b})=\{{\ve x}\in \R^t_{\ge 0}: \bH{\ve x}={\ve b}\}.
\end{equation*}
To show that $D$ satisfies (i), it is sufficient to prove that for any $\bb\in D$, there exists a $\ve x\in Q(\bH, {\ve b})\cap\Z^t$ with $|\supp(\ve x)|\le 3n/2$.
Let us consider the following linear optimisation problem:}
\begin{equation}\label{Norm_maximisation}
\max\left \{x_1+\cdots+x_t: 
\bx=(x_1,\ldots,x_t)^\top\in Q(\bH, \bb) \right\}\,.
\end{equation}
Since $C$ is pointed and $\bb\in C$, it is clear that \eqref{Norm_maximisation} is feasible and bounded.
Let ${\ve \lambda}$ be an optimal vertex solution for \eqref{Norm_maximisation}. Hence, ${\ve \lambda}$ has at most $n$ non-zero entries and, renumbering the coordinates, we may assume that $\lambda_{n+1}=\ldots=\lambda_t=0$.
Furthermore,  the condition $\bb\in D$ implies that $\lambda_1,\ldots,\lambda_n\ge\Delta$.

Note that $\bH_{\cdot, [n]}$ is an optimal basis for \eqref{Norm_maximisation}.
A classical result from the theory of linear programming implies that any vector $\ve \alpha = (\alpha_1,\ldots,\alpha_n,0,\ldots,0)^\top\in\R^t_{\geq 0}$ with $\bH \ve \alpha\in \Z^n$ is an optimal solution to the linear optimisation problem $\max_{\bx\in Q(\bH, \bH \ve \alpha)} (x_1+\cdots+x_t)$, see for example~\cite[Section 5.1]{bertsimas-LPbook}.

Let $\ve\mu=(\lambda_1-\lfloor \lambda_1 \rfloor,\ldots,\lambda_n-\lfloor \lambda_n \rfloor)^\top$.
% be the vector of its fractional components.
%
Consider the vector  ${\ve r}=\bH_{\cdot,[n]}\ve\mu\in C\cap\Z^n$. We can write ${\ve r}= \sum_{i=1}^ 
t\beta_i\bh_i$ with $\beta_i\in\Z_{\ge 0}$.
Hence,
\[
\bb=\sum_{i=1}^n \lfloor\lambda_i\rfloor \bh_i +\sum_{i=1}^t\beta_i\bh_i.
\]

Observe that ${\ve \beta}=(\beta_1, \ldots, \beta_t)^\top\in Q(\bH, \br)$ by construction and that the linear optimisation problem 
\[
\max\{x_1+\cdots+x_t: 
\bx=(x_1,\ldots,x_t)^\top\in Q(\bH, \br)\}
\]
has an optimal vertex solution $(\mu_1, \ldots, \mu_n, 0, \ldots, 0)^\top$. 
Hence,
$\sum_{i=1}^t\beta_i\le\sum_{i=1}^n\mu_i$.

If $\sum_{i=1}^n\mu_i\le n/2$, then at most $\lfloor n/2\rfloor$ of the numbers $\beta_i$ can be non-zero and the result follows.
To settle the case $\sum_{i=1}^n\mu_i > n/2$, we consider the vector ${\ve \gamma}= (\lceil\lambda_1\rceil-\lambda_1,\ldots,\lceil\lambda_n\rceil-\lambda_n)^\top$.
We have ${\ve s}=\bH_{\cdot,[n]}{\ve \gamma}\in C\cap\Z^n$ and, consequently,  ${\ve s}=\sum_{i=1}^t\delta_i\bh_i$ with  $\delta_i\in\Z_{\ge 0}$. 
Observe that ${\ve \delta}=(\delta_1, \ldots, \delta_t)^\top\in Q(\bH, {\ve s})$ and that the linear optimisation problem 
\[
\max\{x_1+\cdots+x_t: 
\bx=(x_1,\ldots,x_t)^\top\in Q(\bH, {\ve s})\}
\]
has an optimal vertex solution $(\gamma_1, \ldots, \gamma_n, 0, \ldots, 0)^\top$. 
Hence, 
\[
\sum_{i=1}^t\delta_i\le\sum_{i=1}^n\gamma_i< n/2\,
\]
and, consequently, ${\ve s}$ can be expressed as non-negative integer combination of strictly less than $n/2$ Hilbert basis elements.
Let $q=\left|\det\bH_{\cdot,[n]}\right|\le\Delta$.
Then $q{\ve \gamma}$ is integral by Cramer's rule.
Recall that we have $\bb \in D$ and, in particular, $\lambda_i\ge\Delta$ for all $i\in[n]$.
Thus, we obtain that
\begin{equation*}
\eta_i=\lambda_i-(q-1)\gamma_i= (\lambda_i + \gamma_i)-q \gamma_i
\end{equation*}
are non-negative integers for all $i\in[n]$. 
Finally, we can express $\bb$ as 
\begin{equation*}
\begin{aligned}
{\ve b} &= \sum_{i=1}^n \lambda_i \bh_i = \sum_{i=1}^n (\eta_i+(q-1)\gamma_i)\bh_i\\
&=\sum_{i=1}^n \eta_i \bh_i  + (q-1){\ve s}\,.
\end{aligned}
\end{equation*}
This completes the proof since ${\ve s}$ is the non-negative integral combination of strictly less than $n/2$ Hilbert basis elements.
\qed

\subsection{Proof of Theorem~\ref{asymLowBound}} Let $C$ be a pointed rational $n$-dimensional cone in $\R^n$.
Theorem 6.1 in \cite{brunsgubeladzenormalpoly1999} implies that if $\aCr(C)=n$, then $\Cr(C)=n$.
The counterexample to the integer Carath\'eodory conjecture, shown in \cite{brunsgubehenkcounterexampleintcara99}, provides a 6-dimensional cone $C_6$ with 
$\Cr(C_6)=7$. Hence, by Theorem 6.1, we get the lower bound $\aCr(C_6)>6$.  Furthermore, the inequality $\aCr(C_6)\le\Cr(C_6)$ implies $\aCr(C_6)=7$.

%Suppose $n\geq 6$, otherwise the claim follows from $\aCr(C) \ge n$. 
%
Lemma 4.4 in \cite{brunsgubeladzenormalpoly1999} shows that $\aCr(C\times C')=\aCr(C)+\aCr(C')$.
Following the construction in \cite{brunsgubehenkcounterexampleintcara99} and setting
$
C_n=\left(\BIGOP{\times}_{i=1}^{\lfloor n/6 \rfloor}C_6\right)\times C',
$
where $C'$ is any pointed, full-dimensional, rational cone in $\R^{n\bmod 6}$, we obtain a cone that satisfies~\eqref{asympt_lower_bound}.\qed

	\section{Proofs of Theorems \ref{thm_outer_unimod_bimod} and \ref{thm_main_simplicial_polyhedral}} 
	\label{section_half_space_proofs}
 Throughout this section, we work with the polytope
	\begin{align*}
		P_{\bm{1}}(\bA) = \lbrace \bx\in\R^n : \bm{0}\leq \bA\bx\leq\bm{1}\rbrace,
	\end{align*}
	where $\bm{1}$ denotes the all-ones vector. 
	Note that $P_{\bm{1}}(\bA)$ is full-dimensional if and only if $C(\bA)$ is full-dimensional. Further, $P_{\bm{1}}(\bA)$ is bounded as $\bA$ has full column rank. 
	
	When proving Theorem \ref{thm_outer_unimod_bimod} and Theorem \ref{thm_main_simplicial_polyhedral}, we employ the following strategy: 
	Firstly, we argue that $P_{\bm{1}}(\bA)\cap\Z^n\backslash\lbrace\bm{0}\rbrace\neq\emptyset$. Then, given $\bz\in\intt C(\bA)\cap\Z^n$, this implies the existence of some Hilbert basis element $\bh\in H( C(\bA))$ such that the point $\bz - \lambda\bh$ for some $\lambda\in\Z_{>0}$ is contained in the boundary of $C(\bA)$; see Lemma \ref{lemma_outer_walk_to_face}. Next, we work with our new integer vector $\bz - \lambda\bh$ and iterate this procedure using Lemma \ref{lemma_outer_fulldim_polyhedra} below. Hence, we use at every step exactly one Hilbert basis element and the dimension of the face of $C(\bA)$ which contains the current integer vector in the relative interior decreases by at least one. Consequently, our strategy results in expressing $\bz$ as an integer combination of at most $n$ Hilbert basis elements. 
	
	We begin by proving the first step, that is, the existence of a Hilbert basis element $\bh$ from above. By doing so, we exploit a crucial property of Hilbert basis elements: given $\bh\in H(C)$ and $\by_1,\by_2\in C \cap \Z^n$ such that $\bh = \by_1 + \by_2$, then either $\by_1 = \bm{0}$ or $\by_2 = \bm{0}$; see, e.g., \cite[Chapter 16.4]{schrijvertheorylinint86} for some details.
	\begin{lemma}\label{lemma_outer_walk_to_face}
		Let $\bA\in\Z^{m\times n}$ be a full column rank matrix with rows ${\ve a}_1, \ldots, {\ve a}_m$ such that $P_{\bm{1}}(\bA)\cap\Z^n\backslash\lbrace\bm{0}\rbrace\neq \emptyset$. Given $\bz\in \intt C(\bA)\cap \Z^n$, there exists a Hilbert basis element $\bh \in H(C(\bA))$ and $\lambda\in\Z_{>0}$ such that $\bz - \lambda\bh\in C(\bA)$ and $\ba_i^\top(\bz - \lambda\bh) = 0$ for some $i\in\lbrack m\rbrack$.
	\end{lemma}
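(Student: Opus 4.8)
The plan is to first produce a Hilbert basis element $\bh\in H(C(\bA))$ that \emph{itself} lies in $P_{\bm{1}}(\bA)$, and then to walk from the interior point $\bz$ in the direction $-\bh$ until the first facet of $C(\bA)$ is met, checking that this happens at a positive integer multiple of $\bh$. For the first task, I would take the hypothesised nonzero lattice point $\bw\in P_{\bm{1}}(\bA)\cap\Z^n$. Being a nonzero element of the semigroup $C(\bA)\cap\Z^n$, it decomposes as $\bw=\sum_j\mu_j\bh_j$ with $\bh_j\in H(C(\bA))$ and $\mu_j\in\Z_{\ge 0}$ not all zero; I fix an index $j_0$ with $\mu_{j_0}\ge 1$ and set $\bh=\bh_{j_0}$. (Alternatively, one can repeatedly apply the indecomposability property recalled before the lemma to split $\bw$ down to a Hilbert basis element.) Since every summand satisfies $\ba_i^\top\bh_j\ge 0$ and $\mu_{j_0}\ge 1$, for each row $i$ one obtains $1\ge\ba_i^\top\bw=\sum_j\mu_j\ba_i^\top\bh_j\ge\ba_i^\top\bh$; as $\ba_i^\top\bh$ is a non-negative integer, this forces $\ba_i^\top\bh\in\{0,1\}$ for all $i\in[m]$, that is, $\bh\in P_{\bm{1}}(\bA)$.

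For the second task, I would set $\lambda=\min\{\ba_i^\top\bz : i\in[m],\ \ba_i^\top\bh=1\}$. This minimum is taken over a nonempty index set, because $\bA$ has full column rank and $\bh\neq\bm{0}$, so $\bA\bh\neq\bm{0}$ and hence $\ba_i^\top\bh=1$ for at least one $i$. Because $\bz\in\intt C(\bA)\cap\Z^n$, each $\ba_i^\top\bz$ is a positive integer, so $\lambda\in\Z_{>0}$. To see $\bz-\lambda\bh\in C(\bA)$, I check the rows: if $\ba_i^\top\bh=0$ then $\ba_i^\top(\bz-\lambda\bh)=\ba_i^\top\bz>0$, and if $\ba_i^\top\bh=1$ then $\ba_i^\top(\bz-\lambda\bh)=\ba_i^\top\bz-\lambda\ge 0$ by the choice of $\lambda$, with equality for the minimizing index. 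Since $\bz,\bh\in\Z^n$, the point $\bz-\lambda\bh$ is integral, lies in $C(\bA)$, and satisfies $\ba_i^\top(\bz-\lambda\bh)=0$ for that index, which is exactly the assertion.

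The one genuinely substantive step is producing a Hilbert basis element inside $P_{\bm{1}}(\bA)$ rather than merely some nonzero lattice point; this is where both the hypothesis $P_{\bm{1}}(\bA)\cap\Z^n\setminus\{\bm{0}\}\neq\emptyset$ and the structure of Hilbert bases enter. The mechanism is that the constraints $\ba_i^\top(\cdot)\le 1$ behave monotonically along a semigroup decomposition, so any Hilbert basis element dominated by $\bw$ still lies in $P_{\bm{1}}(\bA)$. Everything after that is the elementary observation that moving along $-\bh$ first reaches a facet at $\lambda=\min\{\ba_i^\top\bz:\ba_i^\top\bh=1\}$, and the integrality $\ba_i^\top\bh\in\{0,1\}$ is precisely what makes this $\lambda$ a positive integer, which also keeps $\bz-\lambda\bh$ in $\Z^n$. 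This is the reason $P_{\bm{1}}(\bA)$, and not some larger box, is the correct object to impose.
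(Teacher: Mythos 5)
Your proof is correct, and the second half (walking along $-\bh$ with $\lambda=\min\{\ba_i^\top\bz:\ba_i^\top\bh=1\}$) coincides with the paper's argument; the difference lies in how you produce a Hilbert basis element inside $P_{\bm{1}}(\bA)$. The paper does not decompose the witness $\bw$: it instead picks $\bh\in P_{\bm{1}}(\bA)\cap\Z^n\setminus\{\bm{0}\}$ with $\left|\supp(\bA\bh)\right|$ minimal and proves directly that this $\bh$ is indecomposable --- if $\bh=\by_1+\by_2$ with both summands nonzero, then $\bA\by_1,\bA\by_2\in\{0,1\}^m$ and their supports partition $\supp(\bA\bh)$ into strictly smaller nonempty parts (full column rank forces $\bA\by_j\neq\bm{0}$), contradicting minimality --- and hence a Hilbert basis element. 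Your route runs the other way: you invoke the generating property of $H(C(\bA))$ (van der Corput's theorem, cited in the paper's introduction) to write $\bw=\sum_j\mu_j\bh_j$, and then use monotonicity of the constraints $\ba_i^\top(\cdot)\le 1$ along the decomposition to conclude that any Hilbert element appearing with $\mu_{j_0}\ge 1$ already satisfies $\bA\bh\in\{0,1\}^m$. Both arguments are sound and of comparable length; yours is slightly more economical in that it needs no extremal choice, while the paper's is more self-contained in that it only uses the indecomposability characterization of Hilbert basis elements rather than the full statement that they generate the semigroup, and its minimal-support choice pins down a concrete element rather than an arbitrary one from a decomposition. One cosmetic point: your claim that \emph{every} $\ba_i^\top\bz$ is positive silently assumes $\ba_i\neq\bm{0}$; this is harmless, since positivity is only needed for indices with $\ba_i^\top\bh=1$, where $\ba_i\neq\bm{0}$ automatically, but it is worth stating the restriction explicitly.
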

	\begin{proof}
		Let $\bh\in P_{\bm{1}}(\bA)\cap\Z^n\backslash\lbrace\bm{0}\rbrace$ be chosen such that $\left|\supp(\bA\bh)\right|$ is minimal among all vectors in $P_{\bm{1}}(\bA)\cap\Z^n\backslash\lbrace\bm{0}\rbrace$. We observe that
		\begin{align*}
			\lambda = \min_{i\in\supp(\bA\bh)} \ba_i^\top\bz
		\end{align*}
		already yields the claim for $\bh$ as $\bA\bh\in\lbrace 0,1\rbrace^m$. So it suffices to argue that $\bh$ is a Hilbert basis element.

		Let $\by_1,\by_2\in C(\bA)\cap\Z^n$ be such that $\bh = \by_1 + \by_2$. It is sufficient to show that one of the vectors $\by_1$, $\by_2$ is zero. Since $\bA\bh\in\lbrace 0,1\rbrace^m$, we have $\bA\by_i\in\lbrace 0,1\rbrace^m$ for $i=1,2$ as well. However, this implies that $\supp(\bA\by_i)\subseteq\supp(\bA\bh)$ for $i=1,2$. The minimality of $\left|\supp(\bA\bh)\right|$ implies that either $\by_1= \bm{0}$ or $\by_2=\bm{0}$.
		\qed
	\end{proof}
	Lemma \ref{lemma_outer_walk_to_face} enables us to argue inductively over the dimension $n$. To make this precise, we establish in the next result a representation for lower-dimensional faces and their minors. Let  $\bA\in\Z^{m\times n}$ be a matrix with full column rank and let $\bb\in\Z^m$.
In what follows, we consider the polyhedron $P(\bA,\bb)= \lbrace \bx\in\R^n : \bA\bx\leq\bb\rbrace$ and define 
	\begin{align*}
		\gcd(\bA) = \gcd(\det \bA_{I,\cdot} : I\subseteq\lbrack m\rbrack \text{ with } |I|= n)\,.
	\end{align*}
	For $\bA\in\Z^{m\times n}$ with full row rank, we set $\gcd(\bA)=\gcd(\bA^\top)$.
	
	\begin{lemma}
		\label{lemma_outer_fulldim_polyhedra}
		Let  $\bA\in\Z^{m\times n}$ be a matrix with full column rank and $\bb\in\Z^m$. Further, let $F_I= P(\bA,\bb)\cap \left(\bv + \ker \bA_{I,\cdot}\right)$ be a  $(n-k)$-dimensional face of $P(\bA,\bb)$ with $\aff F_I\cap\Z^n \neq \emptyset$, where $I\subseteq\lbrack m\rbrack$ with $\left| I \right| = k$ and $\bA_{I,\cdot}\bv = \bb_I$ hold. Then, there exists a unimodular transformation $\bU\in \GL(n,\Z)$ and orthogonal projection $\pi:\R^n\to\R^{n-k}$ with the following properties:
		\begin{itemize}
			\item[(i)] $\pi\left(\bU \cdot F_I\right)$ is a $(n-k)$-dimensional polyhedron that admits a representation of the form $\pi\left(\bU \cdot F_I\right)=P(\tilde\bA,\tilde\bb)$ with an integer vector $\tilde\bb$ and integer matrix $\tilde\bA $ which is at most $\left\lfloor\frac{\Delta(\bA)}{\gcd( \bA_{I,\cdot})}\right\rfloor$-modular.
			\item[(ii)] There exists a one-to-one mapping between $F_I\cap \Z^n$ and $\pi\left(\bU \cdot F_I\right)\cap\Z^{n-k}$.
			%\item[(iii)]  $\pi\left(\bU \cdot F_I\right)=P(\tilde\bA,\tilde\bb)$ for an integer matrix $\tilde\bA$ and integer vector $\tilde\bb$.
		\end{itemize}
	\end{lemma}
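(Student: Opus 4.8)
The plan is to move $F_I$ by a unimodular map into an affine subspace parallel to a coordinate subspace, and then to read off the projected polyhedron in the remaining free coordinates. First I would use that $\bA_{I,\cdot}\in\Z^{k\times n}$ has full row rank $k$, which is forced by $\dim F_I=n-k$ together with $\aff F_I=\bv+\ker\bA_{I,\cdot}$. Computing a column Hermite normal form gives $\bW\in\GL(n,\Z)$ with $\bA_{I,\cdot}\bW=[\,\bB\mid\bm 0\,]$ for a nonsingular lower-triangular block $\bB\in\Z^{k\times k}$. Setting $\bU=\bW^{-1}\in\GL(n,\Z)$ yields $\bA_{I,\cdot}\bU^{-1}=[\,\bB\mid\bm 0\,]$, so that $\bU\cdot\ker\bA_{I,\cdot}=\lin\{\be_{k+1},\dots,\be_n\}$. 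I take $\pi:\R^n\to\R^{n-k}$ to be the orthogonal projection onto the last $n-k$ coordinates. Then $\bU\cdot\aff F_I$ is a translate of $\lin\{\be_{k+1},\dots,\be_n\}$, i.e.\ an affine subspace on which the first $k$ coordinates are constant.

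To establish (ii), I would invoke the hypothesis $\aff F_I\cap\Z^n\neq\emptyset$: choosing $\bw\in\aff F_I\cap\Z^n$, the vector $\bU\bw$ is integral and lies in $\bU\cdot\aff F_I$, so the constant first $k$ coordinates of that subspace form an integer vector $\bc\in\Z^k$. Consequently a point $\by\in\bU\cdot\aff F_I$ is integral if and only if its first $k$ coordinates equal $\bc$ and $\pi(\by)\in\Z^{n-k}$. Hence $\pi$ restricts to a bijection between $(\bU\cdot F_I)\cap\Z^n$ and $\pi(\bU\cdot F_I)\cap\Z^{n-k}$, and composing with the unimodular bijection $\bx\mapsto\bU\bx$ gives the required one-to-one map with $F_I\cap\Z^n$.

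For (i), I would substitute $\by=\bU\bx$ into $\bA\bx\le\bb$, obtaining $\bA\bU^{-1}\by\le\bb$ with $\bA\bU^{-1}\in\Z^{m\times n}$ integral, as $\bU^{-1}$ is integral. On $\bU\cdot\aff F_I$ the rows indexed by $I$ read $\bB\bc=\bb_I$ and are constant, so they can be discarded; writing $\bA\bU^{-1}=[\,\bC\mid\tilde\bA\,]$ with $\tilde\bA$ the last $n-k$ columns, the remaining rows become $\tilde\bA_{[m]\setminus I,\cdot}\,\pi(\by)\le \bb_{[m]\setminus I}-\bC_{[m]\setminus I,\cdot}\,\bc=:\tilde\bb$, an integral system. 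This exhibits $\pi(\bU\cdot F_I)=P(\tilde\bA_{[m]\setminus I,\cdot},\tilde\bb)$ with $\tilde\bb$ integral.

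The main work, and the step I expect to be the crux, is the modularity bound. For $J\subseteq[m]\setminus I$ with $|J|=n-k$, the block-triangular shape $\bA_{I,\cdot}\bU^{-1}=[\,\bB\mid\bm 0\,]$ gives
\[
\left|\det (\bA\bU^{-1})_{I\cup J,\cdot}\right|=\left|\det\bB\right|\cdot\left|\det\tilde\bA_{J,\cdot}\right|.
\]
Since $\bU$ is unimodular, $\left|\det(\bA\bU^{-1})_{I\cup J,\cdot}\right|=\left|\det\bA_{I\cup J,\cdot}\right|\le\Delta(\bA)$, while $\left|\det\bB\right|$, being the product of the diagonal entries of the Hermite normal form of $\bA_{I,\cdot}$, equals the gcd of the maximal minors of $\bA_{I,\cdot}$, i.e.\ $\gcd(\bA_{I,\cdot})$ (matching the paper's convention $\gcd(\bA_{I,\cdot})=\gcd((\bA_{I,\cdot})^\top)$ for the full-row-rank matrix $\bA_{I,\cdot}$). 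Hence every $(n-k)\times(n-k)$ minor of $\tilde\bA_{[m]\setminus I,\cdot}$ satisfies $\left|\det\tilde\bA_{J,\cdot}\right|\le\Delta(\bA)/\gcd(\bA_{I,\cdot})$, and being an integer it is bounded by $\lfloor\Delta(\bA)/\gcd(\bA_{I,\cdot})\rfloor$; taking the maximum over $J$ gives the stated bound. The two points I would be most careful about are the identity $\left|\det\bB\right|=\gcd(\bA_{I,\cdot})$ read against the paper's definition, and checking that \emph{every} maximal minor of $\tilde\bA_{[m]\setminus I,\cdot}$ arises this way, so that no larger minor is missed.
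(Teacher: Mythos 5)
Your proposal is correct and follows essentially the same route as the paper's proof: a column Hermite normal form producing $\bA_{I,\cdot}\bU^{-1}=(\bB,\bm{0})$, projection onto the last $n-k$ coordinates, the block-triangular determinant identity $\left|\det(\bA\bU^{-1})_{I\cup J,\cdot}\right|=\left|\det\bB\right|\left|\det\tilde\bA_{J,\cdot}\right|\le\Delta(\bA)$ for the modularity bound, and integrality of the constant first $k$ coordinates (the paper's $\tilde\bz$) deduced from $\aff F_I\cap\Z^n\neq\emptyset$ to settle both the integral right-hand side and the bijection in (ii). The only cosmetic difference is that you justify $\left|\det\bB\right|=\gcd(\bA_{I,\cdot})$ via the Hermite normal form diagonal while the paper cites the Smith normal form; both are standard and equivalent.
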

	\begin{proof}
		We assume without loss of generality that $I = \lbrace 1,\ldots,k\rbrace$.
		There exists a unimodular transformation $\bU\in \GL(n,\Z)$, such that
		\begin{align*}
			\bA_{I,\cdot}\bU^{-1} = (
			\bH,\bm{0})
		\end{align*}
		for some invertible matrix $\bH\in\Z^{k\times k}$ that can be obtained, for instance, by transforming $\bA_{I,\cdot}$ into Hermite normal form; see \cite[Chapter 4]{schrijvertheorylinint86} for more information on the Hermite normal form. Moreover, we have
		\begin{align*}
			\bA\bU^{-1} = \begin{pmatrix}
				\bH & \bm{0} \\
				\star & \tilde{\bA}
			\end{pmatrix}
		\end{align*}
		for some $\tilde{\bA}\in \Z^{(m-k)\times (n-k)}$ with full column rank.
	 	
	 	Let $\pi : \R^n\to\R^{n-k}$ denote the orthogonal projection onto the last $n-k$ coordinates and let $\tilde{\bz}\in\R^k$ be the unique solution of $\bH\bx = \bb_I$. Then,
	 	\begin{align}\label{proof_lemma_outer_minor_projection}
	 		\pi\left(\bU \cdot F_I\right) = \left\lbrace \bx\in\R^{n-k} : \tilde{\bA}\bx\leq\bb_{\lbrack m \rbrack\backslash I} - \bA_{\lbrack m \rbrack\backslash I,\lbrack k \rbrack}\tilde{\bz}\right\rbrace
	 	\end{align}
	 	which is a $(n-k)$-dimensional polyhedron defined by the integral constraint matrix $\tilde{\bA}$. Let us show that $\tilde{\bA}$ is at most $\left\lfloor\frac{\Delta(\bA)}{\gcd \bA_{I,\cdot}}\right\rfloor$-modular. For that purpose, let $\bB$ be a $(n-k)\times (n-k)$ submatrix of $\tilde{\bA}$. We can extend the matrix to 
	 	\begin{align*}
	 		\begin{pmatrix}
	 			\bH & \bm{0} \\
	 			\star & \bB
	 		\end{pmatrix}
	 	\end{align*}
 		which is an $n\times n$ submatrix of $\bA\bU^{-1}$ with determinant $\left|\det \bB\right|\left|\det\bH\right|\leq \Delta(\bA)$. We have $\left|\det \bH\right| = \gcd\bA_{I,\cdot}$ which follows, e.g., from the Smith normal form; see for instance \cite[Chapter 4.4]{schrijvertheorylinint86} for a treatment of Smith normal forms. The latter equality and the integrality of $\tilde{\bA}$ imply that $\tilde{\bA}$ is at most $\left\lfloor\frac{\Delta(\bA)}{\gcd \bA_{I,\cdot}}\right\rfloor$-modular. Further, the right-hand side of the system in (\ref{proof_lemma_outer_minor_projection}) defining $\pi(\bU \cdot F_I)$ is given by $\bb_{\lbrack m \rbrack\backslash I} - \bA_{\lbrack m \rbrack\backslash I,\lbrack k \rbrack}\tilde{\bz}$. To settle property (i), it remains to prove the integrality of the right-hand side. We claim that $\tilde{\bz}$ has to be integral. The integrality follows then from the previous mentioned description of the right-hand side. 
 		
 		Since $\bU$ is unimodular, $\aff F_I\cap\Z^n\neq\emptyset$ implies $\aff (\bU \cdot F_I)\cap \Z^n\neq\emptyset$. Recall that $\tilde{\bz}\in\R^k$ denotes the unique solution of $\bH\bx = \bb_I$. Let $\bz\in\R^n$ be the vector $\tilde{\bz}$ with $n-k$ zeros appended. We have
 		\begin{equation*}
		\begin{aligned}
 			\aff (\bU \cdot F_I) &= \bz + \ker \bA_{I,\cdot}\bU^{-1} = \bz + \left\lbrace \bx\in \R^n : \bx_{\lbrack k \rbrack} 
			= \bm{0} \right\rbrace \\ &= \left\lbrace \bx\in \R^n : \bx_{\lbrack k \rbrack} = \tilde{\bz}\right\rbrace.
 		\end{aligned}
		\end{equation*}
 		Thus, $\aff (\bU \cdot F_I)\cap \Z^n\neq\emptyset$ implies $\tilde{\bz}\in\Z^k$. Hence, property (i) follows by the discussion above. Moreover, we obtain $\by\in \bU \cdot F_I\cap\Z^n$ if and only if $\pi(\by)\in \pi(\bU \cdot F_I)\cap \Z^{n-k}$, which settles property (ii).
 	\qed	
 			\end{proof}
%	
%	As a useful consequence of this result, we can restrict ourselves to full-dimensional cones. The reason for this is that we apply the transformation from Lemma \ref{lemma_outer_fulldim_polyhedra} and obtain a full-dimensional cone in lower dimension. Moreover, the linear hull of every cone contains an integer vector, e.g., the origin. Therefore, integer vectors in the full-dimensional cone in lower dimension correspond one-to-one to integer vectors in the lower-dimensional cone in higher dimension by the second part of Lemma \ref{lemma_outer_fulldim_polyhedra}.

\subsection{Proof of Theorem \ref{thm_outer_unimod_bimod}}
In this proof, we refer to $\bA$ as \textit{unimodular} and \textit{bimodular} if $\Delta(\bA) = 1$ and $\Delta(\bA) = 2$, respectively. 

Suppose first that $\Delta(\bA) = 1$. 
		We argue inductively on $n$. For $n = 1$, the cone is a ray and, thus, the statement immediately holds. Let $n > 1$ and $\bz\in C(\bA)\cap\Z^n$. If $\bz$ lies on the boundary of $C(\bA)$, we restrict to the face which contains $\bz$ in the relative interior and apply Lemma \ref{lemma_outer_fulldim_polyhedra} with respect to that face. This results in a lower-dimensional cone with unimodular constraint matrix and the statement of the theorem follows by induction. 
		
We may now assume $\bz\in\intt C(\bA)\cap\Z^n$, which implies that $C(\bA)$ is a full-dimensional cone. Since $P_{\bm{1}}(\bA)$ is defined by a unimodular matrix with integral right-hand side, every vertex of $P_{\bm{1}}(\bA)$ is integral. There are at least two vertices as $n\geq 2$. Hence, the polytope $P_{\bm{1}}(\bA)$ contains a non-zero integral vector. Using Lemmas \ref{lemma_outer_walk_to_face}  and \ref{lemma_outer_fulldim_polyhedra}, the statement of the theorem follows by induction.

Suppose now that $\Delta(\bA) = 2$. In this case, our proof crucially relies on an integer feasibility result due to Veselov and Chirkov:
\begin{theorem}{\cite[Theorem 1]{veselovchirkovbimodular09}}\label{thm_bimod_integer_feasibility}
	Let $\bA\in\Z^{m\times n}$ be bimodular and $\bb\in\Z^m$ such that $P(\bA,\bb)$ is full-dimensional. Then $P(\bA,\bb)\cap\Z^n\neq\emptyset$.
\end{theorem}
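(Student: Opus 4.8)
The statement is the Veselov--Chirkov integer feasibility theorem, which the paper rightly invokes as a black box; here I sketch a self-contained argument. The plan is an induction on the dimension $n$, using Lemma~\ref{lemma_outer_fulldim_polyhedra} to descend to facets while preserving bimodularity. First observe that $\Delta(\bA)=2$ already forces $\bA$ to have full column rank (some $n\times n$ minor must be nonzero), so $P(\bA,\bb)$ is a line-free full-dimensional polyhedron and hence has at least one vertex. I would prove, by induction on $n$, that every full-dimensional polyhedron $P(\bA,\bb)$ with $\bb\in\Z^n$ and $\Delta(\bA)\le 2$ contains an integer point.

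For the base case $n=1$ the polyhedron is an interval. If it is unbounded it trivially contains integers, so assume it is a bounded interval $[l,u]$ with $l<u$ (full-dimensionality gives positive length). Its finite endpoints have the form $b_i/a_i$ with $a_i\in\{-2,-1,1,2\}$, so $l$ and $u$ are half-integers, and a short check shows that any interval of positive length with half-integral endpoints contains an integer: if $u-l\ge 1$ this is clear, and if $u-l=\tfrac12$ then exactly one of $l,u$ is itself an integer. This is the one place where $\Delta(\bA)\le 2$ is genuinely indispensable: for $\Delta(\bA)=3$ the interval $[\tfrac13,\tfrac23]$ is full-dimensional yet integer-free, which is precisely why the theorem fails there.

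For the inductive step (with $n\ge 2$) I would fix a vertex $\bv$ of $P(\bA,\bb)$ and look at the facet normals that are tight at $\bv$. Since $\bv$ is a $0$-dimensional face, these normals span $\R^n$, so one can choose $n$ linearly independent ones among them, say $\ba_{i_1},\dots,\ba_{i_n}$. The crucial point is that they cannot all be imprimitive: writing $\ba_{i_j}=\gcd(\ba_{i_j})\,\ba_{i_j}'$ with $\ba_{i_j}'$ primitive, the corresponding $n\times n$ determinant equals $\bigl(\prod_j\gcd(\ba_{i_j})\bigr)\det(\ba_{i_1}',\dots,\ba_{i_n}')^\top$, and the second factor is a nonzero integer. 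If every $\gcd(\ba_{i_j})\ge 2$, this determinant would be divisible by $2^n\ge 4$, contradicting that it is an $n\times n$ minor of $\bA$, hence of absolute value at most $\Delta(\bA)=2$. Therefore some facet $F$ through $\bv$ has a primitive normal $\ba_i$, and since $b_i\in\Z$ its supporting hyperplane $\{\bx:\ba_i^\top\bx=b_i\}$ meets $\Z^n$, so $\aff F\cap\Z^n\neq\emptyset$.

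I would then finish by applying Lemma~\ref{lemma_outer_fulldim_polyhedra} to this facet, where $k=1$ and $\gcd(\bA_{\{i\},\cdot})=1$: the projection $\pi(\bU\cdot F)$ is an $(n-1)$-dimensional, hence full-dimensional, polyhedron in $\R^{n-1}$ with an integral right-hand side and a matrix that is at most $\lfloor\Delta(\bA)/1\rfloor=2$-modular. By the induction hypothesis it contains an integer point, which by part (ii) of the lemma lifts to an integer point of $\bU\cdot F$, and therefore, since $\bU\in\GL(n,\Z)$, to an integer point of $F\subseteq P(\bA,\bb)$. The main obstacle is exactly this descent: one must ensure that at every stage there is a facet whose affine hull already contains lattice points, which is what the primitivity-via-determinant argument guarantees. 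Once this is secured, bimodularity is inherited by the facet automatically through the lemma, and only the one-dimensional base case actually consumes the hypothesis $\Delta(\bA)\le 2$.
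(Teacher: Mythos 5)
Your proof is correct, but note that the paper does not prove this statement at all: it is imported verbatim from \cite{veselovchirkovbimodular09} and used as a black box in the proof of Theorem~\ref{thm_outer_unimod_bimod}. So your argument is a genuine addition rather than a variant of anything in the text. It is also sound: there is no circularity in invoking Lemma~\ref{lemma_outer_fulldim_polyhedra}, since that lemma is proved via the Hermite normal form independently of Theorem~\ref{thm_bimod_integer_feasibility}; the base case computation with half-integral endpoints is right; and your key pigeonhole step --- in any choice of $n$ linearly independent facet normals tight at a vertex, the product of their gcds divides a nonzero $n\times n$ minor of $\bA$, hence at most one normal is imprimitive --- is a sharper form of the paper's own observation (in the proof of Theorem~\ref{thm_outer_unimod_bimod}) that $\gcd(\ba)\geq 3$ is impossible for bimodular $\bA$. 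Indeed, your descent mirrors the facet-descent strategy the paper uses for Theorem~\ref{thm_outer_unimod_bimod}, but where the paper needs Theorem~\ref{thm_bimod_integer_feasibility} to handle the $\gcd(\ba)=1$ facets, you close the loop by induction, which is exactly what makes the argument self-contained. Your closing remark is also accurate and illuminating: the vertex argument only needs $\Delta(\bA)\leq 3$ (two imprimitive rows would force a minor of absolute value at least $4$), so the hypothesis $\Delta(\bA)\leq 2$ is consumed solely in dimension one, consistent with the counterexample $\lbrace \bx\in\R^n : 1\leq 3x_1\leq 2,\ 0\leq x_j\leq 1\rbrace$ for $\Delta=3$. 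Two small points deserve a line each: first, ``the facet normals tight at $\bv$ span $\R^n$'' should be justified --- the tangent cone at $\bv$ is full-dimensional and pointed, its facets are facets of $P(\bA,\bb)$ through $\bv$, and the facet normals of a pointed full-dimensional cone must span $\R^n$, since otherwise the cone contains a line; merely-tight non-facet-defining rows cannot substitute here. Second, your induction hypothesis should explicitly carry full column rank of the constraint matrix (so that a vertex exists at every stage); this is automatic, since Lemma~\ref{lemma_outer_fulldim_polyhedra} delivers $\tilde{\bA}$ with full column rank, but it belongs in the statement being inducted on. Also a typo: the right-hand side lives in $\Z^m$, not $\Z^n$.
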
 
	
		We argue again inductively over $n$. Similarly to the $\Delta(\bA) = 1$ case, we may assume that $n > 1$ and $\bz\in \intt C(\bA)\cap\Z^n$. Furthermore, we assume that every row of $\bA$ defines a facet of $C(\bA)$. If this is not the case, we remove rows of $\bA$ which do not correspond to a facet of $C(\bA)$. This operation does not increase $\Delta(\bA)$.
		
		As before, our first goal is to show that $P_{\bm{1}}(\bA)$ contains  a non-zero integer vector. Let $F_{\ba} =\lbrace \bx\in P_{\bm{1}}(\bA) : \ba^\top\bx = 1\rbrace$ define a facet of $P_{\bm{1}}(\bA)$ for some row $\ba$ of $\bA$.  In what follows, we distinguish between the cases $\gcd(\ba) = 1$ and $\gcd(\ba) = 2$. Note that $\gcd(\ba) \geq 3$ is not possible as it violates the assumption that $\bA$ is bimodular.
	
	Let $\gcd(\ba) = 1$. This implies that the affine hull of the facet $F_{\ba} $ contains integer vectors. Applying Lemma \ref{lemma_outer_fulldim_polyhedra}, we receive a full-dimensional polytope in $(n-1)$-dimensional ambient space which is bimodular. Hence, it contains an integer vector by Theorem \ref{thm_bimod_integer_feasibility}. Since the right-hand side equals one, this vector cannot be $\bm{0}$.
		
		Let $\gcd(\ba) = 2$. Every row of $\bA$ is facet defining for $C(\bA)$ as we assumed in the beginning of the proof. Therefore, there exists a facet of $P_{\bm{1}}(\bA)$ defined by $\ba^\top\bx = 0$. By Lemma \ref{lemma_outer_fulldim_polyhedra}, the facet defined by $\ba$ corresponds to a polytope with unimodular constraint matrix and integral right-hand side. As the linear hull of the facet contains integral vectors, every non-zero vertex is integral. There are at least two vertices since $n\geq 2$. So there exists a non-zero integer vector in the facet.

Using Lemmas \ref{lemma_outer_walk_to_face} and \ref{lemma_outer_fulldim_polyhedra}, the statement of the theorem follows by induction.
\qed

\subsection{Proof of Theorem \ref{thm_main_simplicial_polyhedral}}
 Recall that $\bA\in\Z^{n\times n}$ is a nonsingular matrix and, thus, $\Delta(\bA) = \left|\det\bA\right|$. In this manner, we write $\left|\det\bA\right|$ instead of $\Delta(\bA)$ throughout the proof. We will need the auxiliary result below. The proof is based on the theory of lattices; see \cite{GruLek87} for an introduction to lattices.
\begin{lemma}\label{former_claim}
		Suppose that $n\geq \left|\det\bA\right|$. Then the parallelepiped $P_{\bm{1}}(\bA) = \lbrace \bx\in\R^n : \bm{0}\leq \bA\bx\leq \bm{1}\rbrace$ contains a non-zero integer vector.
%		This already implies the first part of our statement by Lemma \ref{lemma_outer_walk_to_face}.
\end{lemma}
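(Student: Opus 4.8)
The plan is to reduce the statement to a pigeonhole argument on the finite abelian group $\Z^n/\bA\Z^n$. The first observation is that if $\bz\in\Z^n$ lies in $P_{\bm{1}}(\bA)$, then $\bA\bz$ is an integer vector all of whose entries lie in $[0,1]$, and hence $\bA\bz\in\{0,1\}^n$. Thus finding a non-zero integer point of $P_{\bm{1}}(\bA)$ is equivalent to finding a non-zero vector $\bm{v}\in\{0,1\}^n$ lying in the sublattice $\bA\Z^n$: given such a $\bm{v}$, the vector $\bz=\bA^{-1}\bm{v}$ is a non-zero integer point with $\bA\bz=\bm{v}\in[\bm{0},\bm{1}]$, as required.

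Next I would exploit that $\bA\Z^n$ is a sublattice of $\Z^n$ of index exactly $\left|\det\bA\right|=\Delta(\bA)$, so that the quotient group $\Z^n/\bA\Z^n$ has order at most $n$ by the hypothesis $n\geq\left|\det\bA\right|$. Consider the $n+1$ nested prefix sums $\bm{w}_0=\bm{0}$ and $\bm{w}_k=\be_1+\cdots+\be_k$ for $k\in[n]$, all of which lie in $\{0,1\}^n$. Their images in $\Z^n/\bA\Z^n$ cannot all be distinct, since there are $n+1$ of them and at most $n$ residue classes. Hence there exist indices $0\le i<j\le n$ with $\bm{w}_i\equiv\bm{w}_j\pmod{\bA\Z^n}$.

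The crucial point is that the difference of two nested prefix sums is again a $\{0,1\}$-vector: the vector $\bm{v}:=\bm{w}_j-\bm{w}_i=\be_{i+1}+\cdots+\be_j$ has all entries in $\{0,1\}$ and is non-zero because $i<j$. By the choice of $i,j$ we have $\bm{v}\in\bA\Z^n$, so $\bm{v}$ is the desired non-zero $\{0,1\}$-vector and $\bz=\bA^{-1}\bm{v}$ completes the proof.

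I do not expect a serious obstacle here; the only step requiring care is the choice of the $n+1$ test vectors. They must be selected so that their pairwise differences remain inside $\{0,1\}^n$: an arbitrary family of $\{0,1\}$-vectors would only yield a difference in $\{-1,0,1\}^n$, which need not correspond to an integer point of $P_{\bm{1}}(\bA)$. Using nested prefix sums guarantees that every difference $\bm{w}_j-\bm{w}_i$ is itself a $\{0,1\}$-vector, which is exactly what makes the pigeonhole collision directly usable.
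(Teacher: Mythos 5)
Your proof is correct and is essentially the paper's argument viewed through the isomorphism $\bm{v}\mapsto\bA^{-1}\bm{v}$: the paper applies the same prefix-sum pigeonhole to $\bw_1,\bw_1+\bw_2,\ldots,\bw_1+\cdots+\bw_n$ (where $\bw_i$ are the columns of $\bA^{-1}$) in the quotient $\bA^{-1}\Z^n/\Z^n$, which is isomorphic to your $\Z^n/\bA\Z^n$. The only cosmetic difference is that by including $\bm{w}_0=\bm{0}$ among your $n+1$ test vectors you fold the paper's case split (``one sum is integral, or two sums share a coset'') into a single pigeonhole application.
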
	
\begin{proof}	
		Suppose that the matrix $\bA^{-1}$ has columns $\bw_1,\ldots,\bw_n$ and let $\Lambda = \bA^{-1}\Z^n$. Then $\bw_i\in\Lambda$ for each $i\in\lbrack n\rbrack$. Note that $\bA\bw_i = \be_i$ and hence $\bw_1,\bw_1+\bw_2,\ldots,\bw_1+\cdots+\bw_n\in P_{\bm{1}}(\bA)\cap \Lambda$. We analyse these sums with respect to the cosets of the finite abelian group $\Lambda / \Z^n$. Note that $\left| \Lambda / \Z^n\right| = \left|\det\bA\right|$ since $\det\Lambda = \left|\det \bA\right|^{-1}$. As $n\geq \left|\det \bA\right|$, either one of the sums is integral or two sums, say $\bw_1+\cdots+\bw_p$ and $\bw_1+\cdots+\bw_q$ for $p < q$, are contained in the same coset of $\Lambda / \Z^n$ by the pigeonhole principle. This implies that $\bw_{p+1}+\cdots+\bw_{q}\in P_{\bm{1}}(\bA)\cap\Z^n$. \qed
\end{proof}

%	\begin{theorem}\label{thm_main_simplicial_polyhedral}
%		Let $\bA\in\Z^{n\times n}$ be a nonsingular matrix. Then,
%		\begin{itemize}
%			\item[(i)] If $1\leq \Delta(\bA) \leq 4$ then $\Cr(C(\bA))=n$.
%			\item[(ii)] If $\Delta(\bA)\ge 5$ then $\Cr(C(\bA))\leq n + \left|\det\bA\right| - 3$. 
%		\end{itemize}
%		
%	\end{theorem}

		Suppose first that $1\leq \left|\det\bA\right| \leq 4$ and take any integer point ${\ve z}$ in $C(\bA)$. By Lemma \ref{lemma_outer_fulldim_polyhedra}, we may assume that $C(\bA)$ is full-dimensional and that $\bz\in \intt C(\bA)\cap \Z^n$. Since every cone of dimension at most three has the ICP \cite[Theorem 2.2]{sebohilbertbasisdreidim90}, we suppose that $n\geq 4$. Thus, we get $n\geq 4 \geq \left|\det\bA\right|$. 
		As $n\geq \left|\det\bA\right|$, Lemma \ref{former_claim} gives $P_{\bm{1}}(\bA)\cap\Z^n\backslash\lbrace\bm{0}\rbrace\neq\emptyset$.  Applying Lemmas \ref{lemma_outer_walk_to_face} and \ref{lemma_outer_fulldim_polyhedra} we replace ${\ve z}$ with an integer point in a lower-dimensional cone whose constraint matrix is at most $\left|\det \bA\right|$-modular. We repeat this procedure until the dimension is at most $\left|\det \bA\right| - 1$. For each iteration, we use exactly one Hilbert basis element and the number of iterations is at most $n - (\left|\det \bA\right| - 1)$. Next, we apply again the result of \cite{sebohilbertbasisdreidim90} stating that the ICP holds for cones in dimension $3 \geq \left|\det\bA\right| - 1$. Thus, we obtain an expression of ${\ve z}$ as an integer combination of at most $n$ elements of $H(C(\bA))$.
		
		Suppose now that $\left|\det\bA\right|\ge 5$ and take any integer point ${\ve z}$ in $C(\bA)$.  Observe that the case $n-1\leq\left|\det\bA\right|$ follows from (\ref{Sebo}). Hence, we may assume $n\geq\left|\det\bA\right|$.  Therefore, using Lemma \ref{former_claim} and then Lemmas   \ref{lemma_outer_walk_to_face} and \ref{lemma_outer_fulldim_polyhedra}  as above, we can replace ${\ve z}$ with an integer point in a lower-dimensional cone which is at most $\left|\det \bA\right|$-modular. We repeat this procedure until the dimension is at most $\left|\det \bA\right| - 1$. As $\left|\det \bA\right| - 1\geq 2$, we can apply Seb\H{o}'s bound, (\ref{Sebo}), and get at most $2\left(\left|\det\bA\right| - 1\right) - 2$ Hilbert basis elements in an integral combination. Together with our previous steps, which give us at most $n - (\left|\det\bA\right| - 1)$ Hilbert basis elements in an integral combination, we obtain an expression of ${\ve z}$ as a non-negative integer combination of at most
		\begin{align*}
			2\left(\left|\det\bA\right| - 1\right) - 2 + \left(n - \left(\left|\det \bA\right| - 1\right)\right) = n + \left|\det \bA\right| - 3
		\end{align*}
elements of $H(C(\bA))$.\qed
	
${}$\newline
%\newline
\noindent{\bf Remark:}  We highlight the limitation of our approach: Given an integer vector ${\ve z}\in C(\bA)$, the strategy of searching for an element $\bh\in H(C(\bA))$ such that for some integer 
$\lambda$ the vector ${\ve z}-\lambda\bh$ reaches a lower-dimensional face of the cone $C(\bA)$ is limited to the case $\Delta(\bA) \le {2}$. This already fails for the well-understood case when $\Delta(\bA) = 3$ and $n = 2$, which, in light of our previous discussion, could be considered the natural next step towards a possible extension of the method. An instance illustrating this deficiency is given by $C(\bA)$ with 
	\begin{align*}
		\bA = \begin{pmatrix}
			1 & 0 \\ 2 & 3
		\end{pmatrix}
	\end{align*}
and  ${\ve z}=(7, -3)^\top\in C(\bA)\cap\Z^n$. Then $\Delta(\bA) = \det\bA = 3$ and the Hilbert basis elements of $C(\bA)$ is given by the vectors $\be_1,\be_2$, $(2,-1)^\top$, and $(3,-2)^\top$. One can check that $P_{\bm{1}}(\bA)\cap\Z^n = \lbrace\bm{0}\rbrace$ and no Hilbert basis element has the desired property. So already in this comparably simple case the method of reducing to lower-dimensional faces of the cone fails.
	
%\textcolor{blue}{Comment:} Do we have an example when $\Delta(\bA) = 3$, $P_{\bm{1}}(\bA)\cap\Z^n = \lbrace\bm{0}\rbrace$, and a {\bf nonsimplicial} cone? If the cone is simplicial, $P_{\bm{1}}(\bA)\cap\Z^n \neq \lbrace\bm{0}\rbrace$ for  $\Delta(\bA) \le n$, so this case is not a good illustration for the failing strategy when 
%$\Delta(\bA) = 3$ as it fails only when $n=2$.
	
	\bibliographystyle{plain}
	
	\bibliography{references}

\end{document}